\documentclass[a4paper]{amsart}
\usepackage{graphicx}
\usepackage{amssymb}
\usepackage{amsmath}
\usepackage{amsthm}
\usepackage{amscd}
\usepackage{xcolor}
\usepackage{enumerate}
\usepackage{array}
\usepackage{caption}
\usepackage{subcaption}
\usepackage{siunitx}
\usepackage{textcomp}
\usepackage[T1]{fontenc}

\newcolumntype{L}[1]{>{\raggedright\let\newline\\\arraybackslash\hspace{0pt}}m{#1}}
\newcolumntype{C}[1]{>{\centering\let\newline\\\arraybackslash\hspace{0pt}}m{#1}}
\newcolumntype{R}[1]{>{\raggedleft\let\newline\\\arraybackslash\hspace{0pt}}m{#1}}

\newtheorem{thm}{Theorem}[section]

\newtheorem{lem}[thm]{Lemma}

\newtheorem{rem}[thm]{Remark}
\theoremstyle{definition}


\captionsetup[subfigure]{labelfont=rm}

\newcommand{\R}{\mathbb{R}}

\newcommand{\eps}{\varepsilon}
\newcommand{\ra}{\rightarrow}

\newcommand{\pic}[4]
{
	\begin{figure}[!htbp]
		\begin{center}
			\includegraphics[width=#2\textwidth]{#1}
			\begin{minipage}[c]{0.8\textwidth}
				\begin{center}
					\caption{#3}\label{#4}
				\end{center}
			\end{minipage}
		\end{center}
	\end{figure}
}

\newcommand{\tab}[5]
{
	\begin{center}
		\begin{table}[!htbp]
			\setlength{\extrarowheight}{1pt}
			\begin{tabular}{| r | L{1.5 cm} | L{1.5 cm} | L{1.5 cm} | L{1.5 cm} | L{1.5 cm} |} 
				\hline
				$N$ & $5 \cdot 1$ & $5 \cdot 2$ & $5 \cdot 3$ & $5 \cdot 4$ & $5 \cdot 5$  \\ \hline
				#1
			\end{tabular}
			\newline
			\vspace*{5pt}
			\newline
			\begin{tabular}{| r | L{1.5 cm} | L{1.5 cm} | L{1.5 cm} | L{1.5 cm} | L{1.5 cm} |} 
			\hline
				$N$ & $5 \cdot 6$ & $5 \cdot 7$ & $5 \cdot 8$ & $5 \cdot 9$ & $5 \cdot 10$  \\ \hline
				#2
			\end{tabular}
			\newline
			\vspace*{5pt}
			\newline
			\begin{tabular}{| r | L{2.75 cm} | L{2.75 cm} | L{2.75 cm}|} 
			\hline
				$N$ & $10^2$ & $10^3$ & $10^4$  \\ \hline
				#3
			\end{tabular}
			\vspace{5pt}
			\caption{#4}\label{#5}
		\end{table}
	\end{center}
}
\numberwithin{equation}{section}
\title[Evolution of dislocation density in metallic materials]{On mathematical aspects of evolution of dislocation density in metallic materials}
\author[Natalia Czy\.z{}ewska et. al.]{Natalia Czy\.z{}ewska,  Jan Kusiak, Pawe\l{} Morkisz, Piotr Oprocha${}^\ast$ ,  Maciej Pietrzyk, Pawe\l{} Przyby\l{}owicz, \L{}ukasz Rauch, Danuta Szeliga}
\address[Czy\.z{}ewska, Morkisz, Oprocha, Przyby\l{}owicz]{AGH University of Science and Technology, Faculty of Applied Mathematics, al. Mickiewicza 30, 30-059 Krak\'ow, Poland}
\address[Kusiak, Pietrzyk, Rauch, Szeliga]{AGH University of Science and Technology, Faculty of Metals Engineering and Industrial Computer Science, al. Mickiewicza 30, 30-059 Krak\'ow, Poland}
\begin{document}
\begin{abstract}
This paper deals with the solution of delay differential equations
describing evolution of dislocation density in metallic materials. Hardening,
restoration, and recrystallization characterizing the evolution of dislocation
populations provide the essential equation of the model. The last term
transforms
ordinary differential equation (ODE) into delay differential equation
(DDE) with strong (in general, H\"older) nonlinearity. We prove upper error bounds for
the explicit Euler method, under the assumption that the right-hand
side function is H\"older continuous and monotone which allows us to compare accuracy of other numerical methods in our model (e.g. Runge-Kutta), in particular when explicit formulas for solutions are not known. Finally, we test
the above results in simulations of real industrial process. 
\end{abstract}
\keywords{delay differential equation; metallic materials; Euler method; Runge-Kutta method; strict error analysis}
\thanks{$\ast$ Corresponding author}
\maketitle	

\tableofcontents

\section{Introduction}
Numerous models of materials developed in the second half of the 20th century use external variables as independent ones \cite{met1}. The model output is a function of some process parameters (e.g., strain, temperature, strain rate), which are external variables and which are grouped in the vector $p$ ($y = y(p)$, where $y$ is the model output). The main drawback of this approach is that it does not properly take into account the history of the considered process. Namely, within these models once the conditions of the process change, the calculated material responses immediately by moving to a new equation of state and the model output is a function of new values of external variables. On the other hand, it was observed experimentally, see for example \cite{met2}, that metallic materials in general show delay in the response to the change in processing conditions. Therefore, the rheological models, which include internal variables as independent parameters, were proposed in the literature. In the internal variable approach (IVM) the model output is a function of time $t$; again of some process parameters (e.g., temperature, strain rate), which we grouped in the vector $p$ and internal variables, which we grouped in the vector $q$: (so now $y = y(t,p,q)$). Since the internal variables remember the state of the material, these models give more realistic description of materials behavior. 

The model with  one internal variable, which is the average dislocation density $\rho$, is usually considered for metallic materials. The model follows fundamental works of Kocks, Mecking, and Estrin \cite{met3,met4}. Main assumptions of this model are repeated briefly below. Since the stress during plastic deformation is governed by the evolution of dislocation populations, 
a competition of storage and annihilation of dislocations, which superimpose in an additive manner, controls a hardening. Thus, the flow stress $\sigma_f$ accounting for softening is proportional to the square root of dislocation density
\begin{equation}\label{eq:sigma}
\sigma_f=a_7+a_6 b\mu \sqrt{\rho},
\end{equation}
where $a_6$ is a material dependent coefficient, $a_7$ is stress due to lattice resistance or solution hardening, $b$ is length of the Burgers vector, and $\mu$ is shear modulus (e.g. see \cite[chapter 3.3]{met1}).

The evolution of dislocation populations is controlled by hardening  ($d\rho/dt=A_1\dot{\varepsilon}$, where $\dot{\varepsilon}$ is the strain rate) and restoration 
($\rho'(t)=-A_2(t)\rho(t)\dot{\varepsilon}(t)$) processes. 
During deformation the dislocation density increases in a monotonic way until the state of saturation is reached. 
%
	However, at elevated temperatures an additional softening mechanism called recrystallization occurs. The term recrystallization is commonly used to describe the replacement of a deformation microstructure by new grains \cite{bibA}.  Processes of phase changes (transformations), which are common in metallic materials, compose nucleation and growth stages. It means that during this process the two phases can coexist.  Recrystallization is classified as a specific type of the transformation, in which the part of the material with increased dislocation density due to deformation is considered an old phase and the part of the material with rebuilt microstructure and free of dislocations is considered a new phase. Two types of the recrystallization can be distinguished, dynamic which occurs during the deformation and static, which occurs after the deformation. The final microstructure and mechanical properties of the alloys are determined, to a large extent, by the recrystallization. The research on the recrystallization dates back to 19th century, and the fast development of the dynamic recrystallization theory was summarized in \cite{bibA}. The most recent research on this process is described in \cite{met5}. A lot of factors have a significant effect on the recrystallization, including the stacking fault energy, the process conditions (temperature, strain rate), the grain size and few other metallurgical parameters. Modeling of recrystallization has been for decades based on the Johnson-Mehl-Avrami-Kolmogorov model, 
 which is based on the external variables only and gives erroneous results when process conditions are changed. In the present work an approach based on the internal variable, which is a dislocation density, was proposed. Since various parts of the material during recrystallization can be in a different state and this process is launched when certain threshold of the accumulated energy is reached, the rate of this process depends on the history of the energy accumulation. The energy accumulated in the material in the form of dislocations from the past acts as the driving force for the current progress of the recrystallization. Similarly, the driving force during static recrystallization depends on the energy accumulated earlier in the material during deformation. This process is launched when certain threshold of the accumulated energy is reached and the rate of this process depends on the history of the energy accumulation. In the mathematical description of this phenomenon a delay differential equation is a natural tool. This approach appeared first in \cite{Dav}. For more details, see Chapter 3.3 in \cite{met1}.


From the above reasoning, it turns out that the evolution of dislocation populations accounting for hardening, recovery, and recrystallization is given by
	\begin{equation}
	\label{RD_0}
	\rho'(t)=A_1(t)\cdot\dot{\varepsilon}(t)-A_2(t)\cdot\rho(t)\cdot\dot{\varepsilon}(t)^{1-a_9}-A_3(t)\cdot(\rho(t))^{a_8}\cdot\mathcal{R}(t-t_{cr})\quad t\geq 0,
	\end{equation}
where as before $t$ is time, $\dot{\varepsilon}$ is the strain rate, $A_1,A_2,A_3$ are model parameters (sometimes time independent, but in most of real world cases dependent on other process parameters, such as $t$, $\dot{\varepsilon}$ etc.),
and $a_8,a_9\in [0,1]$ are additional model coefficients.
The function $\mathcal{R}$ is responsible for the delay in the response to the change in processing conditions, and in the most of practical considerations it is enough to consider 
	\begin{equation}
\mathcal{R}(s)=\mathbf{1}_{(0,+\infty)}(s)\cdot\rho(s).\label{defRsimple}
	\end{equation}
In what follows, we will always use $\mathcal{R}$ in the form \eqref{defRsimple} in \eqref{RD_0}. In what follows, by a solution of \eqref{RD_0} we mean any continuous function $\rho$, which we assume $C^1$ everywhere with the only possible exception at the point $t_{cr}$, where one-sided derivatives may disagree.

The rate of hardening is inversely proportional to the length of the Burgers vector $b$ and the free path for dislocations $l$, that is $A_1 = 1/(bl)$ when $\dot{\varepsilon} > 0$ and $A_1 = 0$ when $\dot{\varepsilon} = 0$. The recovery and the recrystallization are temperature dependent processes following the Aarhenius law \cite{met6}.  The average free path for dislocations $l$, the self-diffusion parameter $A_2$, and the grain boundary mobility $A_3$ in equation \eqref{RD_0} are calculated as
\begin{align}\label{form:l}
				& l =  \left\{ \begin{array}{ll}
				a_1 Z^{-a_{13}}, \quad &\textrm{ when } \dot{\varepsilon} > 0,\\
				0, \quad &\textrm{ when } \dot{\varepsilon} = 0,
				\end{array} \right.
\end{align}
\begin{eqnarray}
A_2&=&a_2 \exp \left(\frac{-a_3}{RT}\right),\\
A_3&=&a_4 \frac{\mu b^2}{2 D} \exp \left(\frac{-a_5}{RT}\right),
\end{eqnarray}
where: $a_2$ is self-diffusion coefficient, $a_3$ is activation energy for self-diffusion, $a_4$ is coefficient of the grain boundary mobility, $a_5$ is activation energy for grain boundary mobility, $D$ is austenite grain size, $Z = \dot{\varepsilon}\exp(Q/(RT))$, is the Zener-Hollomon parameter, $Q$ is activation energy for deformation, $R$ is the universal gas constant equal  to $8.314$ J/mol, and $a_1$, $a_{13}$ are auxiliary model coefficients. Note that in real industrial process even $T$ varies in time, so coefficients $A_1,A_2,A_3$ are complicated functions changing in time.

Critical dislocation density for recrystallization is calculated as
\begin{equation}
\rho_{cr}= a_{11} + a_{12} Z^{a_{10}}\label{eq:rhocr}
\end{equation}
where $a_{10}$, $a_{11}$, $a_{12}$ are coefficients, and $t_{cr}$ is the time between beginning of deformation and beginning of recrystallization (i.e. the moment of reaching $\rho_{cr}$).
Note that $\rho_{cr}$ depends on $Z$ which also changes in time. In simplified approach, $\rho_{cr}$ will be constant, but in practice it is not. In any case, we are interested in the first time that this value is reached (curves representing $\rho$ and $\rho_{cr}$ intersect), which is by the definition value of $t_{cr}$.

In real world, measurement of dislocation density during the process is difficult. Fortunately, flow stress can be measured, and it is dependent on the dislocation density, which evolution is given by equation \eqref{RD_0}. Measurements of flow stress from experiments for different materials are presented in Figure~\ref{pic:strain}.
	
\pic{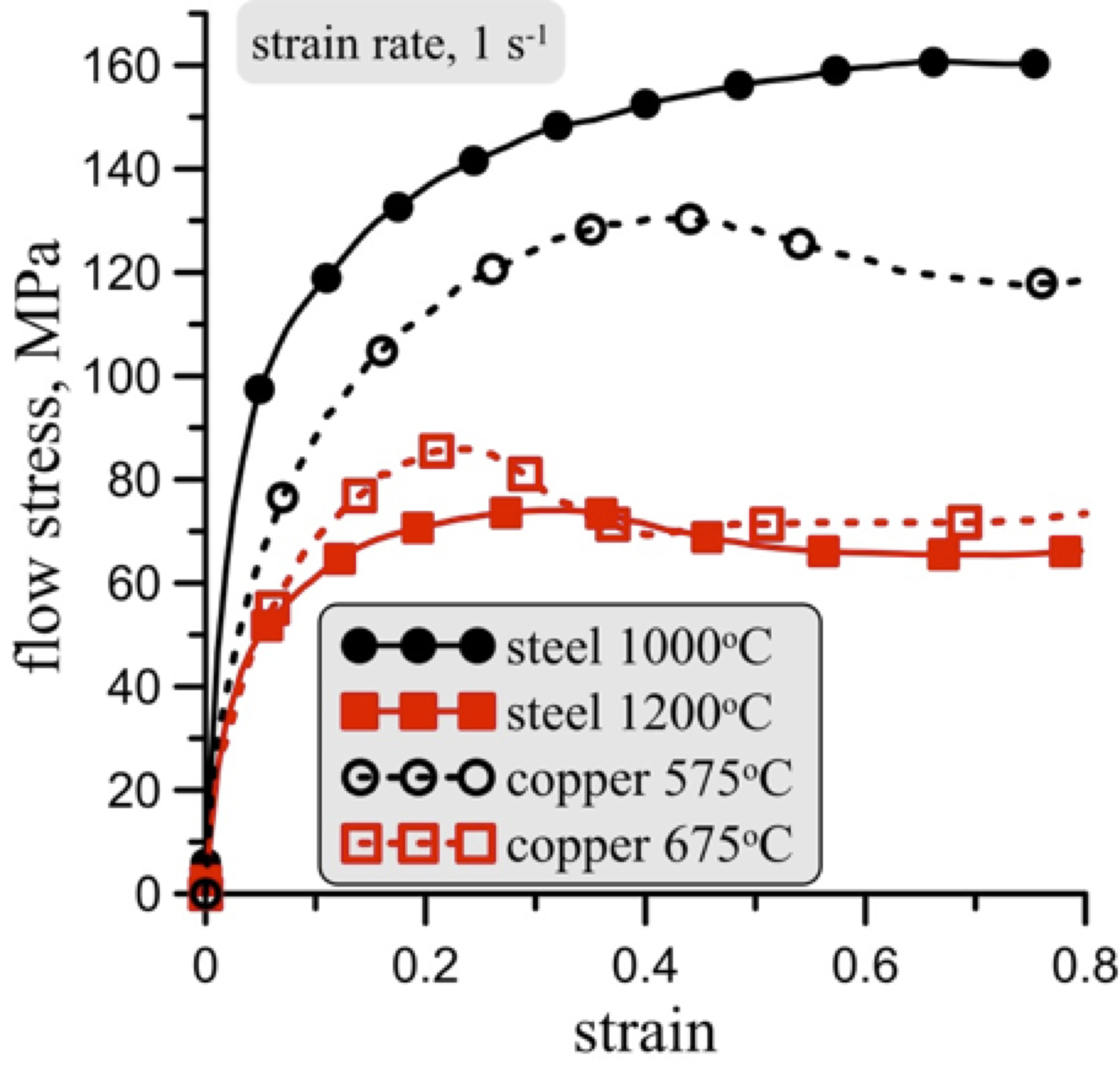}{0.5}{Typical responses of metallic materials subjected to deformation at elevated temperatures, results of the tests for DP steel \cite{Add100} and copper \cite{Add101}}{pic:strain}

Besides numerical simulations, we will perform a detailed theoretical analysis of \eqref{RD_0}, and there are a few good reasons to do that.
First of all, it is hard to find in the literature mathematical tools that can be directly applied to this type of equations, while in recent years some studies of its numerical evolution were undertaken. Classical literature for ordinary differential equations, e.g. \cite{Butcher, Hairer}, assumes some regularity of right-hand side function, commonly Lipschitz condition. Similar assumptions occur for delay differential equations, cf. \cite{Bellen, Smith}. Unfortunately, there is no strict mathematical analysis of the error 
even in the case of standard numerical methods like explicit Euler method, for considered here
nonlinear delay differential equations with a locally H\"older continuous and monotone right-hand side function. In our opinion it is valuable to show that results of these simulations reflect the real behavior of the system. Fortunately, analytic solutions and rigorous formulas can be used for numerical tests on this equation for simplified equations derived from \eqref{RD_0}, especially the cases when coefficients $A_i$ are no longer time (or other process parameters) dependent. As a result of this study we want to ensure that numerical methods, which are accurate at one hand, and have low computational cost at the same time. As we will see, there are good candidates here (as we prove they behave well for simplified models). 

While nowadays there is high popularity in methods of higher order (e.g. Runge-Kutta scheme), they are not suitable for our needs. First of all, observe that in \eqref{RD_0} the right-hand side function is only monotone and locally H\"older continuous, however it is not differentiable at $0$ and it is not even globally Lipschitz continuous (recall that the global Lipschitz condition is usually imposed in the literature). 
Yet another problem in the case of delayed equations, is that in practice in the case of higher order we will need value of delayed function in points not used in mesh of computation. This leads to interpolation of these values, possibly canceling effect of higher order, and making precise error analysis extremely problematic.
Taking all the above reasons into account, we decided to stick with classical Euler scheme whose correctness and suitability we are convinced both numerically and mathematically. In particular, we provide in Theorem \ref{rate_of_conv_expl_Eul} the error bounds for the classical Euler scheme under such irregular assumptions. Moreover, numerical results reported in Section \ref{numerical_solutions} confirmed its good behavior, when applied to the equation \eqref{RD_0} with real-world parameters. For further numerical experiments in real-world setting we refer the reader to our recent paper \cite{NC19}.

The paper is organized as follows. Section~\ref{sec_2} is devoted to existence and uniqueness of solutions of \eqref{RD_0}.  Section~\ref{error} contains error behavior analysis for explicit Euler method with some discussion why we finally chose it for our main numerical experiments. Finally, in Section~\ref{numerical_solutions} some numerical results are given, with simulations for \eqref{RD_0} with real world parameters of selected metallic materials (copper and Dual Phase steel, DP steel for short) at the end.
	
\section{Existence and uniqueness of solutions of some instances of \eqref{RD_0}}\label{sec_2}
In this section we will consider \eqref{RD_0} with some relatively mild additional conditions on time-dependent coefficients $A_1,A_2, A_3$ and $\dot \varepsilon$ (mainly that they are bounded, end extremal values satisfy some relations bonding them together). Before we can prove main results of this section,
we will consider the following auxiliary delay differential equation obtained by simplification of \eqref{RD_0} to the form
\begin{equation}
\label{RD_1}
\rho'(t)=A_1-A_2\cdot \rho(t)-A_3\cdot(\rho(t))^{a_8}\cdot\mathbf{1}_{(t_{cr},+\infty)}(t)\cdot\rho(t-t_{cr}), 
\end{equation}
where $a_8\in [0,1]$ and $A_1,A_2,A_3>0$ are constant. Observe that in \eqref{RD_1}, compared to \eqref{RD_0}, we assume constant strain rate $\dot{\varepsilon}(t)\equiv 1$, and by convention function $\mathcal{R}$ is given by \eqref{defRsimple}. Properties of this simplified equation will allow us to approximate evolution of \eqref{RD_0}.

\subsection{Existence and uniqueness of solution}\label{existance}	
We start with presenting two auxiliary results on \eqref{RD_1}, which will help us to analyze \eqref{RD_0}.
Note that the case $a_8=0$ is very similar to the simple delayed equation $\rho'(t)=A_1-A_2\cdot\rho(t)-A_3\cdot\rho(t-\tau)$ considered in \cite{UF}. Unfortunately, we may not use directly formulas of solutions from there, since in our case of
\eqref{RD_1}, influence of delayed term is also delayed by characteristic function in \eqref{defRsimple}. In \cite{UF} it was pointed out that too large value of $\tau$ with respect to $A,B,C$ can result in unbounded oscillations 
and as a result negative values of $\rho$. In what follows we will see that the condition $\frac{A_3}{A_2}<1$ always prevents it, while as reported in \cite{UF}, cases 
$\frac{A_3}{A_2}\geq 1$ may lead to unstable solutions. The situation in this case is much dependent on the value of $\rho_{cr}$, however.
The analysis of \eqref{RD_1} will lead to analogous conditions on coefficients in \eqref{RD_0}. However as we will see later, our model (with real world parameters)
will satisfy these assumptions.
The following result is an adaptation of the proof of Theorem~3.2. in \cite{Smith} to delay differential equations \eqref{RD_1}. The argument is standard, however, we present it for the reader's convenience.
\begin{lem}\label{lem:bounded}
Let $\rho\colon [0,\sigma)\to \R$ be a noncontinuable solutions of delay differential equations \eqref{RD_1} and assume that $\sigma<+\infty$.
Then $\lim_{t\to \sigma^{-}}|\rho(t)|=+\infty$.
\end{lem}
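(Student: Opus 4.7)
The plan is the standard contradiction argument for the blow-up alternative, adapted to the delayed structure of \eqref{RD_1}. Suppose to the contrary that $\sigma<+\infty$ but $\lim_{t\to\sigma^-}|\rho(t)|\neq+\infty$. I will show that $\rho$ admits an extension beyond $\sigma$ as a solution of \eqref{RD_1}, contradicting noncontinuability.

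The first step is a global a priori bound for $|\rho|$ on $[0,\sigma)$. Decompose the interval into the finitely many windows $I_k:=[kt_{cr},(k+1)t_{cr})\cap[0,\sigma)$ for $k=0,1,\dots,K$, where $K=\lceil\sigma/t_{cr}\rceil-1$. On $I_0$ the indicator in \eqref{RD_1} vanishes and the equation reduces to the linear ODE $\rho'=A_1-A_2\rho$, whose explicit solution is bounded by some $B_0$ on $I_0$. Inductively, on $I_k$ the delayed argument $t-t_{cr}$ lies in $I_{k-1}$, so $|\rho(t-t_{cr})|\leq B_{k-1}$; combined with the elementary inequality $|u|^{a_8}\leq 1+|u|$ valid for $a_8\in[0,1]$, equation \eqref{RD_1} yields
\[
|\rho'(t)|\leq (A_1+A_3 B_{k-1})+(A_2+A_3 B_{k-1})\,|\rho(t)|,\qquad t\in I_k,
\]
and Gronwall's inequality furnishes a finite bound $B_k$ for $|\rho|$ throughout $I_k$. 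Since $K$ is finite, $B:=\max_{0\leq k\leq K}B_k<+\infty$ bounds $|\rho|$ on all of $[0,\sigma)$.

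With $|\rho|\leq B$, equation \eqref{RD_1} shows that $|\rho'|$ is also bounded on $[0,\sigma)$, so $\rho$ is Lipschitz and hence uniformly continuous; the Cauchy criterion then provides a finite limit $\rho^*:=\lim_{t\to\sigma^-}\rho(t)$, and I set $\rho(\sigma):=\rho^*$. For $t\in[\sigma,\sigma+\delta]$ with $\delta<t_{cr}$, the delayed value $\rho(t-t_{cr})$ is already known from this extension, so on $[\sigma,\sigma+\delta]$ equation \eqref{RD_1} reduces to a non-delayed scalar ODE with continuous right-hand side, to which Peano's existence theorem produces a $C^1$ local solution starting from $\rho(\sigma)=\rho^*$. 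Concatenating with $\rho$ on $[0,\sigma]$ gives an extension of $\rho$ to $[0,\sigma+\delta)$ that solves \eqref{RD_1}, contradicting noncontinuability.

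The main obstacle is the boundedness step: the H\"older power $\rho^{a_8}$ is not Lipschitz, so a naive Gronwall argument on the full interval fails. Splitting $[0,\sigma)$ into windows of length $t_{cr}$ and exploiting $|u|^{a_8}\leq 1+|u|$ on each window converts the estimate into a linear one that Gronwall handles, and finiteness of $\sigma$ ensures that finitely many iterations suffice.
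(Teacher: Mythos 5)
Your proof is correct, but it takes a more elementary and self-contained route than the paper's, and in fact proves something stronger. Both arguments start from the same observation: once the finite horizon $[0,\sigma)$ is cut into finitely many windows of length $t_{cr}$, the delayed factor $\rho(t-t_{cr})$ becomes a \emph{known} function on each window, so \eqref{RD_1} collapses to a non-delayed ODE there. The paper then simply cites the standard continuation alternative for ODEs (Hale, Theorem~2.1): a noncontinuable solution of an ODE with continuous right-hand side must eventually leave every compact set of the domain, and since the time variable stays in a bounded interval this forces $|\rho(t)|\to+\infty$. You instead produce an explicit a~priori bound: on each window $I_k$ you dominate the delayed factor by the previous bound $B_{k-1}$, linearize the H\"older term via $|u|^{a_8}\le 1+|u|$, and close the estimate with Gronwall; finitely many windows give a global bound $B$, and Peano then lets you extend past $\sigma$. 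Your approach buys self-containment (no appeal to the continuation theorem as a black box) and actually establishes the stronger fact that \emph{no} solution of \eqref{RD_1} blows up in finite time, so the lemma's hypothesis $\sigma<+\infty$ can never hold and the statement is vacuously true; the key extra observation is that the H\"older nonlinearity still has at most linear growth. One presentational remark: the condition $\lim_{t\to\sigma^-}|\rho(t)|\neq+\infty$ that you negate at the outset is never used, since your a~priori bound follows from $\sigma<+\infty$ alone. That is logically sound (a contradiction from a weaker hypothesis is still a contradiction), but it is cleaner to say directly that you are showing every noncontinuable solution has $\sigma=+\infty$, which subsumes the lemma.
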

\begin{proof}
There is $j\geq 0$ such that $jt_{cr}<\sigma\leq (j+1)t_{cr}$. But then we can view $\rho$ as a noncontinuable solution of the ODE defined for $0\leq t<\sigma+t_{cr}$:
$$
	x'(t)=A_1(t)\cdot\dot{\varepsilon}(t)-A_2(t)\cdot x(t)\cdot\dot{\varepsilon}(t)^{1-a_9}-A_3(t)\cdot(x(t))^{a_8}\cdot\mathbf{1}_{(0,+\infty)}(t)\cdot\rho(t-t_{cr}).
$$
If $\rho$ was bounded, then by standard argument for ODEs (e.g. see \cite[Theorem~2.1]{Hale}) $\rho$ can be continued beyond $\sigma$ which is a contradiction.
\end{proof}
\begin{lem}\label{thm:exists:a80}
Assume that $\frac{A_3}{A_2}<1$ and $a_8=0$.
The solutions of delay differential equations \eqref{RD_1} with the initial-value condition
$$
0\leq \rho(0)=\rho_0<\rho_{cr}< A_1/A_2; \quad \rho(t)=\rho_0 \text{ for }t<0
$$
exist for any $t\geq 0$ and are bounded by $[0,A_1/A_2]$.
\end{lem}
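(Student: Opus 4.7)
With $a_8 = 0$ the nonlinear delayed term collapses to a linear one, so \eqref{RD_1} reduces to
$$
\rho'(t) = A_1 - A_2\,\rho(t) - A_3\,\mathbf{1}_{(t_{cr},+\infty)}(t)\,\rho(t-t_{cr}),
$$
which is a linear delay differential equation. The plan is to use the classical method of steps together with the variation-of-constants formula, and then induct on the intervals $I_k := [k\,t_{cr},(k+1)\,t_{cr}]$ to establish the bound $0 \le \rho(t) \le A_1/A_2$; existence on $[0,+\infty)$ will then follow from Lemma~\ref{lem:bounded}.

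First I would set things up: on each $I_k$, once $\rho$ is known on $[0,k\,t_{cr}]$, the map $s\mapsto \mathbf{1}_{(t_{cr},\infty)}(s)\,\rho(s-t_{cr})$ becomes a known continuous forcing, and the equation is a linear (inhomogeneous) ODE in $\rho$. Standard ODE theory gives a unique solution on $I_k$ matching the value $\rho(k\,t_{cr})$ inherited from the previous step, and this yields uniqueness globally. The variation-of-constants formula then reads
$$
\rho(t) = \rho(k\,t_{cr})\,e^{-A_2(t-k\,t_{cr})} + \int_{k\,t_{cr}}^{t} e^{-A_2(t-s)}\bigl(A_1 - A_3\,\mathbf{1}_{(t_{cr},\infty)}(s)\,\rho(s-t_{cr})\bigr)\,ds,
$$
for $t\in I_k$.

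The induction base ($k=0$) is immediate: on $[0,t_{cr}]$ the indicator vanishes, so $\rho(t) = \rho_0 e^{-A_2 t} + (A_1/A_2)(1-e^{-A_2 t})$ is a convex combination of $\rho_0$ and $A_1/A_2$, both of which lie in $[0, A_1/A_2]$. For the inductive step, suppose $0 \le \rho(u) \le A_1/A_2$ for $u \in [0,k\,t_{cr}]$. Then for $s \in I_k$ the forcing term satisfies
$$
A_1\Bigl(1-\tfrac{A_3}{A_2}\Bigr) \;\le\; A_1 - A_3\,\mathbf{1}_{(t_{cr},\infty)}(s)\,\rho(s-t_{cr}) \;\le\; A_1,
$$
where the lower bound uses the standing assumption $A_3/A_2 < 1$ (and the fact that the indicator is $0$ whenever $s < t_{cr}$). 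Inserting these two-sided bounds into the variation-of-constants formula and computing the elementary integral $\int_{k\,t_{cr}}^{t} e^{-A_2(t-s)}\,ds = (1-e^{-A_2(t-k t_{cr})})/A_2$ produces
$$
\rho(t) \le \rho(k\,t_{cr})\,e^{-A_2(t-k\,t_{cr})} + \tfrac{A_1}{A_2}\bigl(1-e^{-A_2(t-k\,t_{cr})}\bigr) \le \tfrac{A_1}{A_2},
$$
and, symmetrically, $\rho(t) \ge \rho(k\,t_{cr})\,e^{-A_2(t-k\,t_{cr})} + \tfrac{A_1}{A_2}(1-\tfrac{A_3}{A_2})(1-e^{-A_2(t-k\,t_{cr})}) \ge 0$, using $\rho(k\,t_{cr}) \in [0,A_1/A_2]$ from the inductive hypothesis. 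This closes the induction.

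Finally, the bound $\rho([0,\sigma)) \subset [0,A_1/A_2]$ together with Lemma~\ref{lem:bounded} rules out any finite noncontinuable $\sigma < +\infty$, so the solution extends to all $t \ge 0$, as claimed. The only minor subtlety is handling the indicator at $t = t_{cr}$: the integral formula keeps $\rho$ continuous across this point, and the admitted one-sided-derivative discrepancy at $t_{cr}$ (allowed by the paper's convention for solutions of \eqref{RD_0}) is exactly what the indicator produces. I do not expect a serious obstacle; the principal point to keep careful track of is that the constant $A_1(1 - A_3/A_2)$ is strictly positive, which is precisely where the hypothesis $A_3/A_2 < 1$ is used.
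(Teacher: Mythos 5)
Your proof is correct, and it takes a genuinely different route from the paper's. The paper argues by contradiction: it sets $\gamma=\sup\{t:\rho(s)\in[0,A_1/A_2]\text{ for }0\le s\le t\}$, assumes $\gamma<\infty$, and in the two boundary cases $\rho(\gamma)=A_1/A_2$ and $\rho(\gamma)=0$ shows the vector field points the wrong way, relying on the sign structure of the right-hand side and on Lemma~\ref{lem:bounded} for continuability. You instead exploit the fact that $a_8=0$ makes the delayed equation linear, write the explicit variation-of-constants representation on each $I_k=[kt_{cr},(k+1)t_{cr}]$, and close an induction by sandwiching the forcing term between $A_1(1-A_3/A_2)$ and $A_1$; positivity of the lower bound is exactly where $A_3/A_2<1$ enters. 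Your argument is more constructive and, for this particular linear case, cleaner — existence on each $I_k$ is immediate from linear ODE theory, so the appeal to Lemma~\ref{lem:bounded} at the end is actually redundant (though harmless). The tradeoff is that your method is tied to the closed-form formula and does not carry over to $a_8>0$, whereas the paper's sign/supremum argument is the template that generalizes to Lemma~\ref{thm:exists:a8+} and, via the comparison equations, to Theorem~\ref{thm:exists}. One tiny imprecision worth tightening: you call the forcing on $I_k$ ``continuous,'' but for $k=1$ the map $s\mapsto\mathbf{1}_{(t_{cr},\infty)}(s)\rho(s-t_{cr})$ jumps at $s=t_{cr}$ whenever $\rho_0\neq 0$; this is only piecewise continuous. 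You acknowledge this later when discussing the one-sided derivative mismatch, and it does not affect the integral bounds, but the earlier phrasing should match.
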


\begin{proof}
It is easy to verify that for $t\leq t_{cr}$ the solution $\rho(t)$ exists, is increasing and contained in the interval $(0,\rho_{cr})$. After reaching $t=t_{cr}$, discontinuity in the vector field disappears, and \eqref{RD_1} becomes standard delay differential equation with continuous initial condition, defined by solution of \eqref{RD_1} on the interval $[0,t_{cr}]$
$$
\rho'(t)=A_1 -A_2\rho(t)-A_3 \rho(t-t_{cr}).
$$
In the case that there is a solution that cannot be continued on $\R_+$ it must leave the interval $[0,A_1/A_2]$ first, see Lemma~\ref{lem:bounded}. Denote
$$
\gamma=\sup \{t : \rho(s)\in [0,A_1/A_2] \text{ for all } 0\leq s\leq t\}
$$
and assume that $\gamma<\infty$. It is clear that $\gamma>0$ and there is a decreasing sequence $t_n$, such that $\lim_{n\to \infty}t_n=\gamma$,
$\rho(t_n)$ is well defined (i.e. $t_n$ is in domain of $\rho$) and $\rho(t_n)\not\in [0,A_1/A_2]$.

By definition $\rho(\gamma)\in \{0,A_1/A_2\}$. Let us consider two cases.
\begin{enumerate}
\item Assume first that $\rho(\gamma)=A_1/A_2$. If $\rho(\gamma-t_{cr})>0$
then $\rho'(t)<0$ for $t\in (\gamma-\delta,\gamma+\delta)$ for sufficiently small $\delta$
which is impossible, because for small $\delta$, $\rho'(t)$ is continuous on $(\gamma,\gamma+\delta)$
contradicting the choice of sequence $t_n$.

Let us assume now that $\rho(\gamma-t_{cr})=0$.
This implies that there exists $\delta>0$ such that for $t\in (\gamma-\delta,\gamma+\delta)$ function $r$ defined by
$r(t)=A_1/A_2$ for $t\in [\gamma,\gamma+\delta)$ and $r(t)=\rho(t)$ for $t<\gamma$
is continuous. But then, the function $x(t)=\rho(t)-r(t)$ is a solution of the ODE with continuous vector field defined by
$x'(t)=-A_2x(t)$ for $t\in (\gamma-\delta,\gamma]$ and $x'(t)=-A_2x(t)-A_3 \rho(t-t_{cr})$ for $t\in (\gamma,\gamma+\delta)$, with initial condition $x(\gamma)=0$.
It is clear that $x$ as a solution of that equation must satisfy $x(t)\leq 0$ because for $x>0$ the vector field is negative.
Therefore, for $t\in (\gamma,\gamma+\delta)$ we have $\rho(t)\leq \frac{A_1}{A_2}\leq \rho(\gamma)$.
This is in contradiction with the choice of sequence $t_n$.

\item Assume next that $\rho(\gamma)=0$. There is $\delta$ such that
for $t\in (\gamma-\delta,\gamma+\delta)$ we have $\frac{A_3}{A_2}+\frac{A_2}{A_1} |\rho(t)|<1$.
But then, for all these $t$ we have a lower bound for the values of the vector field
$$
\rho'(t) \geq A_1 -|\rho(t)|A_2-A_3 \frac{A_1}{A_2}\geq A_1\left(1-\frac{A_2}{A_1} |\rho(t)|- \frac{A_3}{A_2}\right)>0
$$
showing that $\rho$ is an increasing function on the interval $(\gamma-\delta,\gamma+\delta)$. A contradiction again.
\end{enumerate}
Indeed, the solution of \eqref{RD_1} is bounded and contained in $[0,A_1/A_2]$, which then implies that it can also be continued onto $\R_+$,
see Lemma~\ref{lem:bounded}.
\end{proof}

While we state the following result for $a_8>0$ in practice we will be interested only in $a_8\in [0,1]$. The proof is standard, we leave details to the reader.

\begin{lem}\label{thm:exists:a8+}
	Assume that $\frac{A_3}{A_2}<1$ and $a_8>0$.
	The solutions of delay differential equations \eqref{RD_1}	and with initial-value condition
	$$
	0\leq \rho(0)=\rho_0<\rho_{cr}< A_1/A_2; \quad \rho(t)=\rho_0 \text{ for }t<0
	$$
	exist for any $t\geq 0$ and are bounded by $[0,A_1/A_2]$.
\end{lem}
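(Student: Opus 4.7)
The plan is to mirror the structure of the proof of Lemma~\ref{thm:exists:a80} while exploiting the fact that $a_8>0$ makes the factor $\rho^{a_8}$ vanish at $\rho=0$; this cleanly replaces the delicate $\frac{A_3}{A_2}<1$ estimate used in the previous lemma for the lower bound.

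First, I would establish local existence by Peano's theorem (the right-hand side of \eqref{RD_1} is continuous in $\rho$ for $\rho\geq 0$), and observe that on $[0,t_{cr}]$ the characteristic function switches the delay term off, so \eqref{RD_1} reduces to the linear ODE $\rho'(t)=A_1-A_2\rho(t)$. Its explicit solution $\rho(t)=\frac{A_1}{A_2}+(\rho_0-\frac{A_1}{A_2})e^{-A_2 t}$ is strictly increasing and stays in $[\rho_0,\frac{A_1}{A_2})$. Then I would use the method of steps on successive intervals $[kt_{cr},(k+1)t_{cr}]$: on each one, the delayed argument $\rho(t-t_{cr})$ has already been produced by the previous step and gives a continuous forcing term, so Peano again yields a local solution. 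By Lemma~\ref{lem:bounded}, the only obstruction to extending up to $+\infty$ is blow-up, so it suffices to prove the uniform bound $\rho(t)\in[0,A_1/A_2]$.

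For the bound, define $\gamma=\sup\{t\geq 0:\rho(s)\in[0,A_1/A_2]\text{ for all }0\leq s\leq t\}$ and suppose toward contradiction that $\gamma<\infty$. Continuity of $\rho$ and the definition of $\gamma$ give $\rho(\gamma)\in\{0,A_1/A_2\}$ together with a decreasing sequence $t_n\searrow\gamma$ with $\rho(t_n)\notin[0,A_1/A_2]$. In the case $\rho(\gamma)=A_1/A_2$, note that $\gamma>t_{cr}$ (since $\rho<A_1/A_2$ on $[0,t_{cr}]$ by the explicit formula). For $t$ in a small right-neighborhood of $\gamma$ we have $t-t_{cr}<\gamma$, hence $\rho(t-t_{cr})\in[0,A_1/A_2]$, and $\rho(t)^{a_8}\geq 0$; therefore the recrystallization term is nonnegative and $\rho'(t)\leq A_1-A_2\rho(t)$. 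A standard comparison with the equilibrium solution $y\equiv A_1/A_2$ of this scalar linear ODE, using $\rho(\gamma)=A_1/A_2=y(\gamma)$, yields $\rho(t)\leq A_1/A_2$ on a right-neighborhood of $\gamma$, contradicting the choice of $t_n$.

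The remaining case $\rho(\gamma)=0$ is the one that requires care in Lemma~\ref{thm:exists:a80}, but becomes transparent when $a_8>0$. Again $\gamma>t_{cr}$, so $\rho'$ is continuous on $(t_{cr},\gamma)$. By continuity $\rho(t)\to 0^+$ as $t\to\gamma^-$, and since $a_8>0$ we get $\rho(t)^{a_8}\to 0$, while $\rho(t-t_{cr})\in[0,A_1/A_2]$ is bounded; therefore
\begin{equation*}
\rho'(t)=A_1-A_2\rho(t)-A_3\rho(t)^{a_8}\rho(t-t_{cr})\longrightarrow A_1>0\quad\text{as }t\to\gamma^-.
\end{equation*}
Hence $\rho'(t)>A_1/2$ on some interval $(\gamma-\epsilon,\gamma)$, and integrating gives $\rho(\gamma-\epsilon)<\rho(\gamma)=0$, contradicting the definition of $\gamma$ on $[0,\gamma]$. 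The expected main obstacle was precisely this lower-bound case; it is tamed here by the vanishing of $\rho^{a_8}$ at the origin, so the hypothesis $A_3/A_2<1$ is not actually exploited beyond keeping the statement parallel to Lemma~\ref{thm:exists:a80}.
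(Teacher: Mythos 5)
The paper does not actually supply a proof of this lemma --- it says only ``The proof is standard, we leave details to the reader'' --- so there is no written argument to compare verbatim; but your proof is correct and is precisely the adaptation of the proof of Lemma~\ref{thm:exists:a80} that the word ``standard'' points to. Your treatment of the lower endpoint $\rho(\gamma)=0$, namely that $a_8>0$ forces $\rho(t)^{a_8}\to 0$ and hence $\rho'(t)\to A_1>0$ as $t\to\gamma^-$, is exactly the simplification the paper itself alludes to inside the proof of Theorem~\ref{thm:exists} (``In fact, $w(t)\geq 0$ when $a_8>0$ despite of relations between other coefficients''), and your concluding remark that the hypothesis $A_3/A_2<1$ is therefore not used in this case is accurate and consistent with the alternative ``either $a_8>0$ or $\frac{\beta_3}{m\beta_2}<1$'' appearing in Theorem~\ref{thm:exists}. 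The upper-endpoint comparison with $y\equiv A_1/A_2$ via the differential inequality $\rho'\leq A_1-A_2\rho$ is a clean replacement for the two sub-cases the paper handles at that endpoint for $a_8=0$; both routes are sound.
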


\begin{rem}
In the following theorem we may replace $m\alpha_2$ by $\inf \dot{\eps}A_2$ and $M\beta_1$ by $\sup \dot{\eps}A_1$. This way it can be applied to a slightly larger class of equations.
\end{rem}

\begin{rem}
	In practice, the value of temperature $T(t)$ is much higher than $0$ and by physical constraints also bounded from the above. So if $\dot\eps(t)>m>0$ then $Z(t)\subset [a,b]\subset (0,+\infty)$ and as a consequence all the coefficients $A_i(t)$ are bounded and separated from zero.
\end{rem}

\begin{thm}\label{thm:exists}
Assume that there are positive constants $0<\alpha_i\leq \beta_i$ and $0<m\leq M$ such that coefficients $A_i(t)\in [\alpha_i,\beta_i]$
(for each $t\geq 0$; this takes into account other process parameters 
that these coefficients are dependent)
and $\dot \eps(t) \in [m,M]$ for every $t\geq 0$.
Additionally assume that $\frac{\alpha_3}{m\alpha_2}<1$ and either $a_8>0$ or $\frac{\beta_3}{m\beta_2}<1$.
Then the solutions of delay differential equations \eqref{RD_0} with the initial-value condition
$$
0\leq \rho(0)=\rho_0<\rho_{cr}< \frac{M\beta_1}{m\alpha_2}; \quad \rho(t)=\rho_0 \text{ for }t<0
$$
exist for any $t\geq 0$ and are bounded by $[0,\frac{M\beta_1}{m\alpha_2}]$ and are unique.	
\end{thm}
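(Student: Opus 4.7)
The plan is to combine a method-of-steps argument with a comparison principle that lifts Lemmas~\ref{thm:exists:a80} and~\ref{thm:exists:a8+} from constant coefficients to the bounded time-dependent setting. Since the delay equals $t_{cr}$ and the indicator $\mathcal{R}(t-t_{cr})$ vanishes for $t<t_{cr}$, the problem decouples into a pure ODE on $[0,t_{cr}]$ followed by successive ODE Cauchy problems on intervals $[kt_{cr},(k+1)t_{cr}]$ for $k\geq 1$.

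First I would treat the initial segment $[0,t_{cr}]$: \eqref{RD_0} collapses to the linear non-autonomous ODE $\rho'(t)=A_1(t)\dot\eps(t)-A_2(t)\dot\eps(t)^{1-a_9}\rho(t)$, whose right-hand side is Lipschitz in $\rho$, so Picard--Lindel\"of produces a unique $C^1$ solution, and elementary sign considerations confine it to $[0,M\beta_1/(m\alpha_2)]$. For each subsequent $k\geq 1$, on $[kt_{cr},(k+1)t_{cr}]$ the function $\psi(t):=\rho(t-t_{cr})$ is inductively already determined, nonnegative, and continuous, so the equation becomes
\[
\rho'(t)=A_1(t)\dot\eps(t)-A_2(t)\dot\eps(t)^{1-a_9}\rho(t)-A_3(t)\psi(t)\rho(t)^{a_8},
\]
a scalar ODE with a continuous (and only H\"older-in-$\rho$ when $a_8\in(0,1)$) right-hand side, to which Peano's theorem supplies local existence.

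Global boundedness in $[0,M\beta_1/(m\alpha_2)]$ is obtained by transplanting the two-case argument from the proofs of Lemmas~\ref{thm:exists:a80}--\ref{thm:exists:a8+}. At any hypothetical exit point where $\rho=M\beta_1/(m\alpha_2)$, the forcing $A_1\dot\eps\leq M\beta_1$ is absorbed by the dissipation term and the delay term is nonpositive, forcing $\rho'\leq 0$. To rule out crossing $0$ from above: when $a_8>0$ the right-hand side at $\rho=0$ equals $A_1\dot\eps>0$; when $a_8=0$ the hypothesis $\beta_3/(m\beta_2)<1$ assumes the role played by $A_3/A_2<1$ in Lemma~\ref{thm:exists:a80} and yields strict positivity at $\rho=0$ once the induced upper bound on $\psi$ is used. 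With no blow-up possible, an obvious adaptation of Lemma~\ref{lem:bounded} to \eqref{RD_0} extends the noncontinuable solution to all of $\R_+$.

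Uniqueness is the step I expect to be the main obstacle, since the H\"older map $\rho\mapsto\rho^{a_8}$ with $a_8\in(0,1)$ can in principle spawn multiple solutions (cf.\ $y'=y^{1/2}$). The rescue is that this H\"older term enters with a nonpositive coefficient $-A_3(t)\psi(t)$: if $\rho_1,\rho_2$ solve the same Cauchy problem on a step interval and $w=\rho_1-\rho_2$, then on $\{w>0\}$ monotonicity of $x\mapsto x^{a_8}$ on $[0,\infty)$ gives
\[
w'(t)\leq -A_2(t)\dot\eps(t)^{1-a_9}\,w(t),
\]
so Gr\"onwall forces $w\leq 0$; the symmetric argument closes the loop, and induction over the step intervals propagates uniqueness from $[0,t_{cr}]$ (where Lipschitz regularity already furnishes it) to all of $\R_+$.
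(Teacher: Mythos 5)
Your method-of-steps decomposition, the use of Peano for existence on each step, and the monotonicity/Gronwall argument for uniqueness are all sound and essentially what the paper does. The gap is in the boundedness argument for the case $a_8=0$, at the lower barrier $\rho=0$.

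You write that at $\rho=0$ (with $a_8=0$) the hypothesis $\beta_3/(m\beta_2)<1$ "yields strict positivity once the induced upper bound on $\psi$ is used." Let us try to close this. The only upper bound on $\psi(t)=\rho(t-t_{cr})$ available inductively is the theorem's own bound $\psi\leq M\beta_1/(m\alpha_2)$. At $\rho=0$ with $a_8=0$ the right-hand side is $A_1(t)\dot\eps(t)-A_3(t)\psi(t)$, and the worst case gives
\begin{equation*}
\rho'(t)\;\geq\; m\alpha_1-\beta_3\cdot\frac{M\beta_1}{m\alpha_2},
\end{equation*}
which is nonnegative only if $\beta_3 M\beta_1\leq m^2\alpha_1\alpha_2$. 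This is a condition relating $A_1$ and $A_3$ across their full ranges, and it is \emph{not} implied by $\beta_3/(m\beta_2)<1$ together with $\alpha_3/(m\alpha_2)<1$: for instance $\alpha_1\ll\beta_1$ with $\alpha_2=\beta_2$, $\alpha_3=\beta_3$, $m=M=1$, $\beta_3/\beta_2=1/2$ satisfies the theorem's hypotheses but violates the above inequality. So the direct barrier argument on $\rho$ itself simply cannot be closed under the stated assumptions.

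The paper avoids exactly this obstruction by sandwiching $\rho$ between solutions $w$ and $z$ of two auxiliary \emph{constant-coefficient} delay equations (equations \eqref{eq:z1} and \eqref{eq:w1}), with $w\leq\rho\leq z$ established interval by interval. Nonnegativity of $\rho$ is then inherited from nonnegativity of $w$, which is established by Lemma~\ref{thm:exists:a80} applied to the $w$-system. Crucially, in that application the delayed term $w(t-t_{cr})$ is controlled by the $w$-system's \emph{own} upper bound $m\alpha_1/(M^{1-a_9}\beta_2)$, not by the larger bound $M\beta_1/(m\alpha_2)$ on $\rho$; the ratio that then appears is of the form $\beta_3/\beta_2$ (up to the $m$,$M$ factors), which is exactly what the hypothesis $\beta_3/(m\beta_2)<1$ controls. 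This self-contained lower envelope is the key idea your proposal is missing. The rest of your argument (upper barrier, $a_8>0$ case, continuation via an analogue of Lemma~\ref{lem:bounded}, and the uniqueness step) goes through once boundedness is secured by the comparison construction.
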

\begin{proof}
Consider the following equations with constant coefficients:
\begin{equation}
	z'(t)=M\beta_1-m\alpha_2z(t)-\alpha_3 \mathbf{1}_{[t_{cr},\infty)}(t)z(t-t_{cr})
	\label{eq:z1}
	\end{equation}
	and with initial-value condition
	$z(t)=\rho_0$ for all $t\leq 0$ and
	\begin{equation}
	w'(t)=m\alpha_1-M^{1-a_9}\beta_2w(t)-\beta_3 \mathbf{1}_{[t_{cr},\infty)}(t)w(t-t_{cr})
	\label{eq:w1}
	\end{equation}
	and with initial-value condition
	$w(t)=0$ for all $t\leq 0$ where $t_{cr}$ is provided solution of \eqref{RD_0}, provided it exists. In the other case we omit delay term in \eqref{eq:z1} and \eqref{eq:w1}.
	By Lemmas~\ref{thm:exists:a80} and \ref{thm:exists:a8+} solutions of \eqref{eq:z1} and \eqref{eq:w1} exist for every $t>0$ and $z(t)< \frac{M\beta_1}{m\alpha_2}$ while $w(t)\geq 0$.
	In fact, $w(t)\geq 0$ when $a_8>0$ despite of relations between other coefficients.	
Simple calculations yield that for $t<t_{cr}$ we have $z'(t)-\rho'(t)\leq 0$ and $\rho'(t)-w'(t)\geq 0$ which implies that $\rho(t)$ exists for $t\in [0,t_{cr}]$ and $w(t)\leq \rho(t)\leq z(t)$ and this inequality can be recursively extended onto further intervals $[nt_{cr}, (n+1)t_{cr}]$ which completes the proof of boundedness of solutions.

Consider time interval $[nt_{cr},(n+1)t_{cr})$ for $n=0,1,\ldots$. We may view $\eqref{RD_2}$ on each of these intervals as ODE. Let
$$
f(t,x)=	A_1(t)\cdot\dot{\varepsilon}(t)-A_2(t)\cdot x\cdot\dot{\varepsilon}(t)^{1-a_9}-A_3(t)\cdot x^{a_8}\cdot\mathcal{R}(t-t_{cr})
$$
where on each of the above intervals delay term $\mathcal{R}(t-t_{cr})$ can be regarded as a function of $t$ but independent of solution. In fact we can view $\mathcal{R}(t-t_{cr})$
as a function defined for all $t\in \R$ by putting $\mathcal{R}(t-t_{cr})=\mathcal{R}(nt_{cr})$ for all $t>(n+1)t_{cr}$.
This way we may regard \eqref{RD_0} as associated ODE
\begin{equation}
	\label{RD_2}
	\rho'(t)=f(t,\rho(t))
\end{equation}
with initial condition $\rho(nt_{cr}):=\lim_{s\to nt_{cr}^-}\rho(s)$, since values $\rho(s)$ for $s\in [(n-1)t_{cr},nt_{cr})$ have already been determined. It is obvious that $\rho(t_{cr})\geq 0$. Note that there are $\delta,\alpha>0$ such that for $x\in [0,\delta]$ we have $f(t,x)>\alpha$, so in particular $\rho(t)$ is bounded away from $0$, provided it is defined. We already know that there is a solution $\rho$ of \eqref{RD_0} (so also \eqref{RD_2}) in $[nt_{cr},(n+1)t_{cr})$ and assume that $\bar\rho$ is another solution in $[t_{cr},2 t_{cr})$, but with the same initial value as $\rho$, i.e., $\bar\rho(t_{cr})=\rho(t_{cr})$. Then we have
\begin{equation}
	\frac{d}{dt}(\rho(t)-\bar\rho(t))^2=2(\rho(t)-\bar\rho(t))(f(t,\rho(t))-f(t,\bar\rho(t)))\leq 0,
\end{equation}
since for all $t$ the function $(0,+\infty)\ni x\mapsto f(t,x)$ is nonincreasing (recall $\rho(t)\geq 0$ for all $t\geq 0$), hence
\begin{equation}
	0\leq(\rho(t)-\bar\rho(t))^2\leq (\rho(nt_{cr})-\bar\rho(nt_{cr}))^2=0.
\end{equation}
Repeating the above arguments inductively on consecutive intervals $[nt_{cr},(n+1)t_{cr})$ we complete the proof.
\end{proof}

\begin{rem}
In practical applications, the condition $\dot \eps(t)\geq m>0$ will not be usually satisfied. The reason is that inside the metallic material usually it will take some time
to observe $\dot \eps(t)>0$. However after some time, say $T_0$ we will have $\dot \eps(t)>m$ for all $t>T_0$ and at the same time $\rho$ will not diverge too much from $\rho_0$.
The reader may check that in these cases, statements of Theorem~\ref{thm:exists} are still valid.

The same reasoning can be applied to other coefficients.
\end{rem}


\subsection{Some solutions of the toy model \eqref{RD_1} and existence of $t_{cr}$}\label{formulas:timindep:01}
For the equation (\ref{RD_1}) we can give explicit formula for solution in the case when $a_8\in\{0,1\}$. For the fractional values of $a_8$ it is rather hard to provide analytic formulas.
On the other hand, we may view the above two cases of $a_8$ as bounds for intermediate values. The main utility of these formulas, is that they can be used to strict control of error in preliminary numerical experiments.
As usual, let us assume that
\begin{equation}
\label{cond_r_cr}
0\leq \rho_0<\rho_{cr}<A_1/A_2.
\end{equation}
Under the assumption above the solution $\rho$ attains the critical value $\rho_{cr}$ in finite time $t_{cr}$, and it is not hard to check that
\begin{equation}
\label{t_cr_tindep}
	t_{cr}=\frac{1}{A_2}\ln\Bigl(\frac{\rho_0-(A_1/A_2)}{\rho_{cr}-(A_1/A_2)}\Bigr).
\end{equation} 
Namely, for $t\in [0,t_{cr}]$ the equation \eqref{RD_1} is reduced to a simple linear equation with the solution
\begin{equation}
	\label{lin_eq_sol}
	\rho(t)=\frac{A_1}{A_2}+\Bigl(\rho_0-\frac{A_1}{A_2}\Bigr)e^{-A_2t},
\end{equation}
 which is a strictly increasing function and $\rho(t_{cr})=\rho_{cr}$. 
In the intervals $[n t_{cr},(n+1)t_{cr}]$, $n\in\mathbb{N}$, we solve the equation (\ref{RD_1}) recursively as follows. Let us denote by $\phi_{n-1}$ the solution $\rho$ in the interval $t \in [(n-1)t_{cr},nt_{cr}]$ ($\phi_0$ in $[0,t_{cr}]$ is given by (\ref{lin_eq_sol})). We have two cases:
 \begin{enumerate}[(i)]
 \item $\mathbf{a_8=0}$: 
The solution with the initial value $\rho(nt_{cr})=\phi_{n-1}(nt_{cr})$ is
\begin{equation}
\label{RD_sol_n0}
	\rho(t)=e^{-A_2(t-nt_{cr})}\cdot\Biggl(\phi_{n-1}(nt_{cr})+\int\limits_{nt_{cr}}^te^{A_2(s-n t_{cr})}\cdot\Bigl(A_1-A_3\cdot\phi_{n-1}(s-t_{cr})\Bigr)ds\Biggr),
\end{equation}
for $t\in [nt_{cr},(n+1)t_{cr}]$.
 \item $\mathbf{a_8=1}$: 
The initial value $\rho(nt_{cr})=\phi_{n-1}(nt_{cr})$ leads to the solution
\begin{equation}
\label{RD_sol_n1}
	\rho(t)=e^{-\int\limits_{nt_{cr}}^t p_{n-1}(s)ds}\cdot\Biggl(\phi_{n-1}(nt_{cr})+A_1\cdot\int\limits_{nt_{cr}}^t e^{\int\limits_{nt_{cr}}^s p_{n-1}(u)du}ds\Biggr),
\end{equation}
where
\begin{equation}
	p_{n-1}(t)=A_2+A_3\cdot\phi_{n-1}(t-t_{cr}),  \quad t\in [nt_{cr},(n+1)t_{cr}].
\end{equation}
\end{enumerate}
Note that both solutions are given in integral form, which most likely is impossible to present as explicit functions in the case $a_8=1$, since we have doubly exponential terms under integral.
For the case $a_8=0$ it seems possible to provide some formulas (similarly to \cite{UF}), however their complexity increases rapidly with multiplies of $t_{cr}$, mainly because vector field is discontinuous. 

On the other hand, equations \eqref{RD_sol_n0} and \eqref{RD_sol_n1} can be treated with numerical integration with rigorous control of numerical errors. This way	we can accurately estimate numerical errors of numerical solutions of these equations (e.g. by explicit  Euler method). This gives us a chance for rigorous comparison of various numerical methods
	for solving equations of type \eqref{RD_1}, possibly ensuring similar behavior of numerical approximations of its further generalizations.
\begin{rem}\label{rem_Bern} \rm The assumption that $A_1>0$ seems to be crucial in order to have a nontrivial problem, since, in the case when $A_1=0$, we get by (\ref{cond_r_cr}) that $\rho_0=\rho_{cr}=0$ and $t_{cr}=0$. Moreover, the equation (\ref{RD_1}) becomes the following Bernoulli equation
\begin{equation}
	\rho'(t)=-A_2\cdot\rho(t)-A_3\cdot(\rho(t))^{a_8+1},\quad t>0,
\end{equation}
which, under the initial value $\rho_0=0$, has the following trivial solution $\rho(t)=0$ for all $t\in [0,+\infty)$ and all $a_8\in [0,1]$. By Theorem \ref{thm:exists:a8+} all solutions of (\ref{RD_1}) tend to the zero solution when $A_1\to 0+$. Hence, only the case when $A_1>0$ is of practical  interest. It is worth mentioning, that it is always the case in considered models.
\end{rem}	

Unfortunately in applications we cannot assume that coefficients $A_i$ are time independent.
Then the question arise to which extent the new equation is similar. The first step will be to show that critical time $t_{cr}$, under certain assumptions on $A_1,A_2$, always exists also in that case. Assume that assumptions of Theorem~\ref{thm:exists} are satisfied, in particular coefficients $A_i$ as well as $\dot \eps$ are bounded, i.e.
$A_i(t)\in [\alpha_i,\beta_i]$ and $\dot \eps(t) \in [m,M]$ for every $t\geq 0$.

Consider time-dependent version of \eqref{RD_1} before reaching $t_{cr}$ derived from \eqref{RD_0} with ``smallest possible'' vector fiels, that is the equation (stated for $t\geq 0$)
\begin{equation}
\label{RD_1_td}
\rho'(t)=m A_1(t)-M^{1-a_9} A_2(t)\cdot \rho(t), \quad \rho(0)=\rho_{0}\geq 0.
\end{equation}
Hence
\begin{equation}
\label{lin_eq_td}
\rho(t)=e^{-M^{1-a_9} \int\limits_0^t  A_2(s)ds}\cdot\Bigl(\rho_0+m\int\limits_0^t e^{M^{1-a_9} \int\limits_0^s A_2(u)du}A_1(s)ds\Bigr),
\end{equation}
which is clearly continuous and positive function on $[0,+\infty)$, and since both $A_1,A_2$ are bounded away from zero, we also have
\begin{equation}
\lim\limits_{t\to +\infty}e^{\int\limits_0^t A_2(s)ds}=+\infty, \quad
\lim\limits_{t\to +\infty}\int\limits_0^t e^{\int\limits_0^s A_2(u)du}A_1(s)ds=+\infty.
\end{equation}
Therefore solution of both \eqref{lin_eq_td} and \eqref{RD_0} satisfy 
$$
\limsup\limits_{t\to +\infty}\rho(t)\geq \frac{m}{M^{1-a_9}}\limsup\limits_{t\to +\infty}\frac{ A_1(t)}{A_2(t)}.
$$
In particular, when $\limsup\limits_{t\to +\infty}\frac{A_1(t)}{A_2(t)}>\frac{M^{1-a_9} \rho_{cr}}{m}$ then there exists $t_{cr}>0$ such that the solution $\rho$ of \eqref{RD_1_td} satisfies $\rho(t_{cr})=\rho_{cr}$ (and by continuity we may assume that $t_{cr}$ is smallest among all such times). 

For example, let us consider the equation \eqref{RD_0} with $a_9=0$, time independent but positive $A_1,A_2$, continuous $\dot\varepsilon$ and, as before, assume that $\dot\varepsilon(t)\geq m>0$ for all $t\geq 0$. Hence, we are considering time-dependent version of \eqref{RD_1}, with $A_1(t)=A_1\dot{\varepsilon}(t)$, $A_2(t)=A_2\dot{\varepsilon}(t)$ and $A_3(t)=A_3$. Under the assumption \eqref{cond_r_cr} we have, by the above considerations, that there always exists $t_{cr}>0$, since in that case $\limsup\limits_{t\to +\infty}\rho(t)=A_1/A_2>\rho_{cr}$ . Moreover, it can be shown that 
\begin{equation}
	\label{t_cr_timedep}
		t_{cr} = \varepsilon^{-1}\Bigl(\frac{1}{A_2}\ln\Bigl(\frac{\rho_0-(A_1/A_2)}{\rho_{cr}-(A_1/A_2)}\Bigr)+\varepsilon(0)\Bigr),
\end{equation}
where $\varepsilon^{-1}$ is the inverse function for $\varepsilon$, which exists since $\eps$ is strictly increasing. Note that in the case when $\dot\varepsilon\equiv 1$ we restore from \eqref{t_cr_timedep} the equation \eqref{t_cr_tindep}. Nevertheless, only in this particular case we know the closed formula for $t_{cr}$. In general the nonlinear equation $\rho(t_{cr})=\rho_{cr}$ has to be solved numerically.

As long as $t_{cr}$ is calculated, we can repeat arguments presented earlier in Section~\ref{formulas:timindep:01} and provide
formulas for solutions when $a_8\in\{0,1\}$. As before, for $t\in [n t_{cr},(n+1)t_{cr}]$, $n\in\mathbb{N}$, we solve the equation (\ref{RD_1}) with time-dependent $A_1,A_2,A_3$ recursively. Let us denote by $\phi_{n-1}$ the solution $\rho$ in the interval $t \in [(n-1)t_{cr},nt_{cr}]$, where $\phi_0$ in $[0,t_{cr}]$ is given by 
\begin{equation}
	\label{lin_eq_td2}
		\phi_0(t)=e^{-\int\limits_0^t A_2(s)ds}\cdot\Bigl(\rho_0+\int\limits_0^t e^{\int\limits_0^s A_2(u)du}A_1(s)ds\Bigr).
\end{equation}
 We consider the following two cases:
\begin{enumerate}[(i)]
	\item  $\mathbf{a_8=0}$: Then for $t\in [nt_{cr},(n+1)t_{cr}]$, $n\in\mathbb{N}$, the solution is given by
	\begin{equation}
	\rho(t)=e^{-\int\limits_{nt_{cr}}^t A_2(s)ds}\cdot\Biggl(\phi_{n-1}(nt_{cr})+\int\limits_{nt_{cr}}^te^{\int\limits_{nt_{cr}}^sA_2(u)du}\cdot q_{n-1}(s)ds\Biggr),
	\end{equation}
	where
	\begin{equation}
	q_{n-1}(t)=A_1(t)-A_3(t)\cdot\phi_{n-1}(t-t_{cr}).
	\end{equation}
	\item $\mathbf{a_8=1}$:  Then for $t\in [nt_{cr},(n+1)t_{cr}]$, $n\in\mathbb{N}$, the solution is 
	\begin{equation}
	\label{a81_analytical_sol}
	\rho(t)=e^{-\int\limits_{nt_{cr}}^t p_{n-1}(s)ds}\cdot\Biggl(\phi_{n-1}(nt_{cr})+\int\limits_{nt_{cr}}^t e^{\int\limits_{nt_{cr}}^s p_{n-1}(u)du}\cdot A_1(s)ds\Biggr),
	\end{equation}
	where
	\begin{equation}
	p_{n-1}(t)=A_2(t)+A_3(t)\cdot\phi_{n-1}(t-t_{cr}),  \quad t\in [nt_{cr},(n+1)t_{cr}].
	\end{equation}
\end{enumerate}
Despite, the formulas being slightly more complicated, they can be effectively used within the process of evaluation of  accuracy and correctness of numerical methods
used to solve time-dependent versions of \eqref{RD_1}.
\section{Error analysis of the explicit Euler method}\label{error}
Since in the case when $a_8\in (0,1)$ the exact formulas for the solution of (\ref{RD_0}) are not known, we use the suitable numerical methods to approximate $\rho$ on a finite time interval. We are interested in the error analysis for solutions of \eqref{RD_0} after reaching $t_{cr}$, since delay activates at this point. This approach will allow us to impose some reasonable assumptions on continuity of vector field. It will be visible in assumptions (F1)-(F4) below; see also Remark~\ref{remark_tilde_rho}.

We consider the (general) delay differential equation
\begin{equation}
\label{dde_1}
	z'(t)=f(t,z(t),z(t-t_{cr})), \quad t\geq 0,
\end{equation}
with a given right-hand side function $f\colon [0,+\infty)\times\mathbb{R}\times\mathbb{R}\to\mathbb{R}$ and where $z(t)=\eta\in\mathbb{R}$ for $t\in [-t_{cr},0]$.

For fixed $n\in\mathbb{N}$ the \textit{explicit Euler method} that approximates a solution $z=z(t)$ of \eqref{dde_1} for $t\in [0,(n+1)t_{cr}]$ is defined recursively for subsequent intervals. Namely, let $N\in\mathbb{N}$ and
\begin{displaymath}
	t_k^j=jt_{cr}+kh, \quad k=0,1,\ldots,N, \ j=0,1,\ldots,n,
\end{displaymath} 
 where
\begin{equation}
	h=\frac{t_{cr}}{N}.
\end{equation}
Note that $\{t^j_k\}_{k=0}^N$ is uniform discretization of the subinterval $[jt_{cr},(j+1)t_{cr}]$. Discrete approximation of $z$ in $[0,t_{cr}]$ is defined by
\begin{eqnarray}
	y_0^0&=&\eta,\label{eq:43}\\
	y_{k+1}^0&=&y_k^0+h\cdot f(t_k^0,y_k^0,\eta), \quad k=0,1,\ldots, N-1.\label{eq:44}
\end{eqnarray}
Let us assume that the approximations $y_k^{j-1}\approx z(t_k^{j-1})$, $k=0,1,\ldots,N$, have already been defined in the interval $[(j-1)t_{cr},jt_{ct}]$ (for $j=1$ it was done in \eqref{eq:43} and \eqref{eq:44}). Then for $j=1,2,\ldots,n$ we take
\begin{eqnarray}
\label{expl_euler_1}
	y_0^j&=&y^{j-1}_N,\\
\label{expl_euler_11}	
	y_{k+1}^j&=&y_k^j+h\cdot f(t_k^j,y_k^j,y_k^{j-1}), \quad k=0,1,\ldots, N-1,
\end{eqnarray}
as the approximation of $z$ in $[jt_{cr},(j+1)t_{cr}]$.

In this section we present rigorous analysis of the error of the explicit Euler method under the nonstandard assumptions on the right-hand side function $f$ of the equation \eqref{dde_1}. Namely, we assume that $f$ is monotone and locally H\"older continuous instead of the global Lipschitz continuity. According to the authors knowledge, there is lack of such analysis in the literature (cf. \cite{Bellen, Butcher, Hairer}), since we consider a non-Lipschitz case. 

Let us emphasize once again, that we cannot apply higher order methods under out assumptions (such us Runge-Kutta schemes), since  we do not assume that function $f$ is differentiable. Observe that indeed it is the case of our main equation \eqref{RD_0}, since the right-hand side function of \eqref{RD_0} is not differentiable at $0$ when $a_8\in (0,1)$.

For the right-hand side function $f:[0,+\infty)\times\mathbb{R}\times\mathbb{R}\to\mathbb{R}$ in the equation (\ref{dde_1}) we impose the following assumptions:
\begin{itemize}
	\item [(F1)] $f\in C([0,+\infty)\times\mathbb{R}\times\mathbb{R};\mathbb{R})$.
	\item [(F2)] There exists a constant $K\geq 0$ such that for all $(t,y,z)\in [0,+\infty)\times\mathbb{R}\times\mathbb{R}$
			\begin{displaymath}
				|f(t,y,z)|\leq K(1+|y|)(1+|z|).
			\end{displaymath}
	\item [(F3)] For all $(t,z)\in [0,+\infty)\times\mathbb{R}$, $y_1,y_2\in\mathbb{R}$
			\begin{displaymath}
				(y_1-y_2)(f(t,y_1,z)-f(t,y_2,z))\leq 0.
			\end{displaymath}
	\item [(F4)] There exist $L\geq 0$, $\alpha,\beta,\gamma\in (0,1]$ such that for all $t_1,t_2\in [0,+\infty)$, $y_1,y_2,z_1,z_2\in\mathbb{R}$
			\begin{eqnarray*}
				|f(t_1,y_1,z_1)-f(t_2,y_2,z_2)|&\leq& L\Bigl((1+|y_1|+|y_2|)\cdot(1+|z_1|+|z_2|)\cdot |t_1-t_2|^{\alpha}\notag\\
&&+|y_1-y_2|^{\beta}\notag\\
&&+(1+|z_1|+|z_2|)|y_1-y_2|^{\gamma}\notag\\
&&+(1+|y_1|+|y_2|)|z_1-z_2|\Bigr).
			\end{eqnarray*}
\end{itemize}
In the following fact we provide an example of the right-hand side function that satisfies the assumptions (F1)-(F4). In what follows we use this function in order to approximate the solution of \eqref{RD_0} in the case when $a_8\in (0,1)$, see also Remark \ref{remark_tilde_rho}. The proof of this fact is standard and we leave it to the reader.

    \begin{lem} 
    \label{tilde_f}
    Let the functions $A,B,C:[0,+\infty)\to [0,+\infty)$ satisfy H\"older condition with the H\"older exponent $\alpha\in (0,1]$ and with the H\"older constant $H\in [0,+\infty)$, $\varrho\in (0,1]$ and define a function $\tilde f:[0,+\infty)\times\mathbb{R}\times\mathbb{R}\to\mathbb{R}$ as follows\footnote{$sgn(x)=1$ if $x\geq 0$ and $sgn(x)=-1$ if $x<0$}
    	\begin{equation}
    		\tilde f(t,y,z)=A(t)-B(t)\cdot sgn(y)\cdot |y|-C(t)\cdot sgn(y)\cdot |y|^{\varrho}\cdot |z|.
    	\end{equation}
    	If the functions $A,B,C$ are bounded in $[0,+\infty)$, then the function $\tilde f$ satisfies (F1)-(F4) with $K=\|A\|_{\infty}+\|B\|_{\infty}+\|C\|_{\infty}$, $L=\max\{3H,\|B\|_{\infty},2\|C\|_{\infty}\}$, $\alpha=\alpha$, $\beta=1$ and $\gamma=\varrho$.
    \end{lem}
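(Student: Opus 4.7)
The plan is to verify conditions (F1)--(F4) one at a time, exploiting the identity $\mathrm{sgn}(y)|y|=y$ to rewrite
\[
\tilde f(t,y,z)=A(t)-B(t)y-C(t)\,g(y)\,|z|,\qquad g(y):=\mathrm{sgn}(y)|y|^{\varrho}.
\]
Three basic facts about $g$ will do most of the work: (a) $g$ is continuous and nondecreasing on $\mathbb{R}$; (b) $|g(y)|\leq 1+|y|$, using $|y|^{\varrho}\leq 1+|y|$ valid for $\varrho\in(0,1]$; and (c) the H\"older-type estimate
\[
|g(y_1)-g(y_2)|\leq 2|y_1-y_2|^{\varrho},
\]
which I would prove by splitting into cases: if $y_1,y_2$ share a sign, subadditivity of $x\mapsto x^{\varrho}$ on $[0,\infty)$ gives the bound with constant $1$; if they have opposite signs, say $y_1\geq 0>y_2$, then $|g(y_1)-g(y_2)|=|y_1|^{\varrho}+|y_2|^{\varrho}$, and since $|y_1-y_2|=|y_1|+|y_2|$ in this case, each summand is bounded by $|y_1-y_2|^{\varrho}$, yielding constant $2$.

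With (a)--(c) in hand, (F1) is immediate, and (F2) follows from the triangle inequality together with $|y|^{\varrho}|z|\leq (1+|y|)|z|$, giving $K=\|A\|_{\infty}+\|B\|_{\infty}+\|C\|_{\infty}$. For (F3) I would compute
\[
\tilde f(t,y_1,z)-\tilde f(t,y_2,z)=-B(t)(y_1-y_2)-C(t)|z|\bigl(g(y_1)-g(y_2)\bigr),
\]
multiply by $y_1-y_2$, and note that the first contribution gives $-B(t)(y_1-y_2)^2\leq 0$ since $B(t)\geq 0$, while the second has the correct sign by monotonicity of $g$ and nonnegativity of $C(t)|z|$.

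The real work lies in (F4). I would split
\[
\tilde f(t_1,y_1,z_1)-\tilde f(t_2,y_2,z_2)=\bigl[A(t_1)-A(t_2)\bigr]-\bigl[B(t_1)y_1-B(t_2)y_2\bigr]-\bigl[C(t_1)|z_1|g(y_1)-C(t_2)|z_2|g(y_2)\bigr]
\]
and analyse each bracket via the standard add-and-subtract telescope. The first contributes $H|t_1-t_2|^{\alpha}$. The second yields $H|y_1||t_1-t_2|^{\alpha}+\|B\|_{\infty}|y_1-y_2|$, supplying the $\beta=1$ term. The third requires three telescoping steps: peel off the $C$-H\"older-in-time factor (bounded using $|z_1|g(y_1)\leq |z_1|(1+|y_1|)$), then the $|z_1-z_2|$ factor controlled by $|g(y_1)|\leq 1+|y_1|$, and finally the $g(y_1)-g(y_2)$ factor controlled by (c), which produces the $\gamma=\varrho$ contribution with coefficient $2\|C\|_{\infty}$ and a Lipschitz-in-$z$ contribution with coefficient $\|C\|_{\infty}$. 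To finish, I would overestimate $|y_i|$ by $1+|y_1|+|y_2|$ and $|z_i|$ by $1+|z_1|+|z_2|$, so that all three H\"older-in-time pieces absorb into a single product $(1+|y_1|+|y_2|)(1+|z_1|+|z_2|)|t_1-t_2|^{\alpha}$ with combined coefficient at most $3H$; the remaining contributions already fit the target shape, yielding $L=\max\{3H,\|B\|_{\infty},2\|C\|_{\infty}\}$. The only nonroutine obstacle I anticipate is the sign-change case in (c); once that is secure, everything else is accounting.
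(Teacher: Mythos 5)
The paper explicitly leaves the proof of this lemma to the reader, so there is no in-paper proof to compare against. Your argument is correct and complete: the key nonroutine estimate is indeed your fact (c), $|g(y_1)-g(y_2)|\leq 2|y_1-y_2|^{\varrho}$ for $g(y)=\mathrm{sgn}(y)|y|^{\varrho}$, and you handle both the same-sign case (via subadditivity of $x\mapsto x^{\varrho}$ on $[0,\infty)$) and the opposite-sign case (splitting $|y_1|^{\varrho}+|y_2|^{\varrho}$ and bounding each by $|y_1-y_2|^{\varrho}$) correctly; the rest is the expected telescoping and the bookkeeping that produces exactly $K=\|A\|_{\infty}+\|B\|_{\infty}+\|C\|_{\infty}$ and $L=\max\{3H,\|B\|_{\infty},2\|C\|_{\infty}\}$.
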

The following theorem is the main result of this section. It states the upper bound on th error of the Euler algorithm under the mild assumptions (F1)-(F4). We want to underline here that up to our knowledge there are no such results in the literature, since in standard situation at least the global Lipschitz condition is satisfied. Unfortunately, due to the form of the main equation \eqref{RD_0}, this condition is not satisfied, which supports necessity of the following result.
\begin{thm} 
\label{rate_of_conv_expl_Eul} Let $\eta\in\mathbb{R}$ and let $f$ satisfy (F1)-(F4). Fix $n\in\mathbb{N}$. Then there exist $C_0,C_1,\ldots,C_n\geq 0$ such that for sufficiently large $N\in\mathbb{N}$ the following holds
	\begin{equation}
		\max\limits_{0\leq k\leq N}|\phi_0(t_k^0)-y_k^0|\leq C_0 (h^{\alpha}+h^{\beta}+h^{\gamma}),
	\end{equation}
	and for $j=1,2,\ldots,n$
	\begin{equation}
		\max\limits_{0\leq k\leq N}|\phi_j(t_k^j)-y_k^j|\leq C_j(h^{1/2}+h^{\alpha}+h^{\beta}+h^{\gamma}),
	\end{equation}
	where $\phi_j=\phi_j(t)$ is the solution of (\ref{dde_1}) on the interval $[jt_{cr},(j+1)t_{cr}]$ and sequences $y_k^j, t_k^j$
		are calculated using explicit Euler method as described in \eqref{expl_euler_1}.
\end{thm}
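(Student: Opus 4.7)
\emph{Overall strategy.} I would argue by induction on $j\in\{0,1,\dots,n\}$, viewing the Euler scheme on each subinterval $[jt_{cr},(j+1)t_{cr}]$ as applied to the ODE $u'(t)=f(t,u(t),r(t))$ in which the delayed slot $r(t)$ is already known: $r(t)\equiv\eta$ for $j=0$, and $r(t)=\phi_{j-1}(t-t_{cr})$ for $j\ge 1$. The base case will produce the stated $h^\alpha+h^\beta+h^\gamma$ rate, while each inductive step picks up the extra $h^{1/2}$ summand coming from the fact that the delayed argument is now only known up to the previous error $|e_k^{j-1}|$.

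\emph{Preliminary bounds and local truncation.} I would first use (F2) together with continuous and discrete Gronwall to obtain a uniform $L^\infty$-bound $M=M(n,K,\eta,t_{cr})$ on $\phi_j$ and on the Euler iterates $y_k^j$; this in particular forces $\phi_j'$ to be uniformly bounded, so $\phi_j$ and $\phi_{j-1}$ are Lipschitz on their respective subintervals. Applying (F4) slot by slot inside
\[
\tau_k^j=\int_{t_k^j}^{t_{k+1}^j}\bigl[f(s,\phi_j(s),\phi_{j-1}(s-t_{cr}))-f(t_k^j,\phi_j(t_k^j),\phi_{j-1}(t_k^{j-1}))\bigr]\,ds,
\]
together with $|s-t_k^j|\le h$, then yields the one-step truncation estimate $|\tau_k^j|\le Ch(h^\alpha+h^\beta+h^\gamma)$, with $C$ depending only on $M$, $L$ and $n$.

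\emph{Base case and inductive step.} For $j=0$ the delayed slot is identically $\eta$, so $e_{k+1}^0=e_k^0+h\bigl[f(t_k^0,\phi_0(t_k^0),\eta)-f(t_k^0,y_k^0,\eta)\bigr]+\tau_k^0$ satisfies $e_k^0\cdot\bigl[f(t_k^0,\phi_0(t_k^0),\eta)-f(t_k^0,y_k^0,\eta)\bigr]\le 0$ by (F3), and (F4) sharpens this to $|f(t_k^0,\phi_0(t_k^0),\eta)-f(t_k^0,y_k^0,\eta)|\le C(|e_k^0|^\beta+|e_k^0|^\gamma)$. Squaring, summing over $k$, and bootstrapping on $\max_k|e_k^0|$ (using the sharp H\"older bound rather than the crude $O(1)$ estimate of the same-step difference) produces the stated $C_0(h^\alpha+h^\beta+h^\gamma)$ bound; no $h^{1/2}$ arises because there is no delay source term. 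For $j\ge 1$ I would add and subtract $f(t_k^j,y_k^j,\phi_{j-1}(t_k^{j-1}))$, so the residual splits into a same-step part (nonpositive when multiplied by $e_k^j$, by (F3)) and a delay part bounded by $L(1+2M)|e_k^{j-1}|$ via (F4). Writing $P_k=e_k^j+h\cdot(\text{same-step})$, so that monotonicity gives $P_k^2\le(e_k^j)^2+Ch^2$, and $Q_k=h\cdot(\text{delay})+\tau_k^j$, Young's inequality $2|P_k||Q_k|\le hP_k^2+Q_k^2/h$ produces
\[
(e_{k+1}^j)^2\le(1+Ch)(e_k^j)^2+Ch\,\varepsilon_{j-1}^2+C\bigl(h^2+h^{1+2\min(\alpha,\beta,\gamma)}\bigr),
\]
where $\varepsilon_{j-1}:=\max_k|e_k^{j-1}|$. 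Discrete Gronwall on $k$ then yields $\varepsilon_j^2\le C(\varepsilon_{j-1}^2+h+h^{2\min(\alpha,\beta,\gamma)})$, hence $\varepsilon_j\le C'\varepsilon_{j-1}+C'(h^{1/2}+h^{\min(\alpha,\beta,\gamma)})$, and iterating $n$ times from the base case gives the claim.

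\emph{Main obstacle.} The delicate point is exactly the Young splitting in the inductive step. If one expands $(e_{k+1}^j)^2$ directly, without first absorbing the same-step contribution into $P_k$, then the cross term $2he_k^j\cdot(\text{delay})$, when bounded directly, either introduces a multiplicative factor $1+C\sqrt{h}$ in the Gronwall recursion (which blows up after $O(1/h)$ steps) or degrades the rate below $h^{1/2}$; it is the grouping into $P_k$ that unlocks the monotonicity bound $P_k^2\le(e_k^j)^2+Ch^2$ and, combined with Young at weight $h$, keeps the delay strictly additive as $Ch\,\varepsilon_{j-1}^2$ in a clean $(1+Ch)$ recursion. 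A secondary subtlety is the bootstrap in the base case: the finer $|e_k^0|^\beta+|e_k^0|^\gamma$ bound on the same-step difference must be used in place of the crude $O(1)$ estimate, otherwise one recovers only $O(h^{1/2})$ at $j=0$ and loses the sharper $h^\alpha+h^\beta+h^\gamma$ rate.
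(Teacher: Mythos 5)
Your proposal is correct, but it follows a genuinely different route from the paper's. The paper proves a standalone ODE lemma (Lemma~\ref{eul_odes}) whose key device is the \emph{local ODE}: on each mesh cell $[t_k,t_{k+1}]$ one flows the exact equation starting from the Euler iterate $y_k$, so the monotonicity (F3) applies to the continuous flow and gives the nonexpansive recursion $|e_{k+1}|\leq|e_k|+Ch(h^{\gamma_1}+h^{\gamma_2}+h^{\gamma_3})$ directly, with no squaring, no Young, and no $h^{1/2}$ loss on the current interval. For $j\geq 1$ the paper then introduces an auxiliary Euler scheme $\tilde y^j_k$ that uses the \emph{exact} delayed values $\phi_{j-1}(t_k^{j-1})$, applies Lemma~\ref{eul_odes} to bound $|\phi_j-\tilde y^j|$, and handles $|\tilde y^j-y^j|$ separately via the squaring/Young recursion $(e^j_{k+1}-h\mathcal L^j_k)^2=(e^j_k+h\mathcal R^j_k)^2$; only this last comparison produces the $h^{1/2}$. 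You skip both the auxiliary scheme and the local-ODE device, comparing $\phi_j$ to $y^j$ directly via a single $P_k/Q_k$ split at each level. This works, and the $j\geq 1$ step is essentially the paper's squaring/Young argument in disguise, but your base case now \emph{needs} the bootstrap you flag, because the discrete Euler step map is \emph{not} nonexpansive under (F3) alone (one has $(e_k+hD_k)^2\leq e_k^2+h^2D_k^2$, which is weaker than $\leq e_k^2$), whereas the paper's continuous-flow comparison is. If you want to keep your route self-contained you should spell out the bootstrap: the crude bound gives $u_{\max}:=\max_k|e^0_k|^2\leq C\,h^{\min(1,2\delta)}$ with $\delta=\min(\alpha,\beta,\gamma)$, and feeding this back once, using $1+2\delta\min(\beta,\gamma)\geq 2\delta$ (which holds since $\delta\leq\min(\beta,\gamma)$ forces $2\delta(1-\min(\beta,\gamma))\leq 1/2$), terminates at $u_{\max}\leq Ch^{2\delta}$. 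Net effect: the paper's factoring through Lemma~\ref{eul_odes} buys a cleaner, bootstrap-free base case and a reusable ODE result, at the cost of the extra auxiliary-scheme layer; your version is shorter structurally but trades that for the base-case bootstrap, which is correct but delicate enough that it should not be left as a remark.
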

We want to underline here that the theorem above gives the error estimates for the Euler scheme on the fixed and bounded time horizon $[0,(n+1)t_{cr}]$. This is crucial, since the discretization parameter $N$ depends on $n$.
 
In order to prove Theorem \ref{rate_of_conv_expl_Eul} we need several auxiliary lemmas. Note that the same symbol may be used for different constants. 
\begin{lem}
	\label{odes_exist_sol}
	Let us consider the  following ordinary differential equation
	\begin{equation}
	\label{ODE_1_Peano}
	z'(t)=g(t,z(t)), \quad t\in [a,b], \quad z(a)=\eta,
	\end{equation}
	where $-\infty<a<b<+\infty$, $\eta\in\mathbb{R}$ and $g:[a,b]\times\mathbb{R}\to\mathbb{R}$ satisfies the following conditions:
	\begin{itemize}
		\item [(G1)] $g\in C([a,b]\times\mathbb{R};\mathbb{R})$.
		\item [(G2)] There exists $K>0$ such that for all $(t,y)\in [a,b]\times\mathbb{R}$
		\begin{displaymath}
		|g(t,y)|\leq K(1+|y|).
		\end{displaymath}
		\item [(G3)] For all $t\in [a,b]$, $y_1,y_2\in\mathbb{R}$
		\begin{displaymath}
		(y_1-y_2)(g(t,y_1)-g(t,y_2))\leq 0.
		\end{displaymath}
	\end{itemize}	
	Then the equation \eqref{ODE_1_Peano} has a unique  $C^1$ solution in $[a,b]$,
	\begin{equation}
	\label{est_sol_z}
	\sup\limits_{t\in[a,b]}|z(t)|\leq (|\eta|+K(b-a))e^{K(b-a)},
	\end{equation}
	and for all $t,s\in [a,b]$
	\begin{equation}
	\label{lip_sol_z}
	|z(t)-z(s)|\leq \bar K |t-s|,
	\end{equation}
	where $\bar K=K\Bigl(1+(|\eta|+K(b-a))e^{K(b-a)}\Bigr)$.
\end{lem}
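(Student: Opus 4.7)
The plan is to handle existence, uniqueness, the sup bound, and the Lipschitz bound in that order, extracting each from a classical argument once the right preliminary estimate is in place.

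For existence I would first invoke Peano's theorem, which applies since $g$ is continuous by (G1), to obtain a $C^1$ solution on a maximal subinterval $[a,\sigma)\subseteq [a,b]$. To extend this solution all the way to $b$, I would derive the a priori bound \eqref{est_sol_z} \emph{on the maximal interval} using (G2): from $z(t)=\eta+\int_a^t g(s,z(s))\,ds$ and $|g(s,z(s))|\le K(1+|z(s)|)$, one gets
\[
|z(t)|\le |\eta|+K(t-a)+K\int_a^t |z(s)|\,ds,\qquad t\in[a,\sigma),
\]
and Gronwall's inequality immediately yields $|z(t)|\le (|\eta|+K(b-a))e^{K(b-a)}$ on $[a,\sigma)$. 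The standard non-blowup argument (as in \cite[Theorem~2.1]{Hale}, already cited in the paper) then forces $\sigma=b$, so the solution exists on all of $[a,b]$ and satisfies \eqref{est_sol_z}.

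For uniqueness I would rely on the monotonicity assumption (G3), which is a one-sided Lipschitz condition. If $z_1,z_2$ are two solutions with $z_1(a)=z_2(a)=\eta$, then $\varphi(t):=(z_1(t)-z_2(t))^2$ is $C^1$ with
\[
\varphi'(t)=2(z_1(t)-z_2(t))\bigl(g(t,z_1(t))-g(t,z_2(t))\bigr)\le 0
\]
by (G3). Since $\varphi(a)=0$ and $\varphi\ge 0$, we get $\varphi\equiv 0$, hence $z_1\equiv z_2$. Note that this argument mirrors exactly the uniqueness step already used in the proof of Theorem~\ref{thm:exists}, so no new idea is needed.

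Finally, the Lipschitz bound \eqref{lip_sol_z} is almost automatic: combining (G2) with the sup bound just obtained gives, for every $t\in[a,b]$,
\[
|z'(t)|=|g(t,z(t))|\le K(1+|z(t)|)\le K\Bigl(1+(|\eta|+K(b-a))e^{K(b-a)}\Bigr)=\bar K,
\]
so by the fundamental theorem of calculus $|z(t)-z(s)|\le \bar K|t-s|$ for all $s,t\in[a,b]$.

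I do not expect any real obstacle here; the whole statement is a textbook package. The only step that requires mild care is the passage from local (Peano) existence to global existence on $[a,b]$: one must establish the a priori bound \emph{before} closing the existence argument, which is why I would derive \eqref{est_sol_z} first on the maximal interval of existence and only then conclude $\sigma=b$. Everything else is Gronwall, a monotonicity trick for uniqueness, and a direct integration for the Lipschitz estimate.
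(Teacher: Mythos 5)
Your proof is correct and follows essentially the same route as the paper: Peano's theorem (continuity plus linear growth) for existence, the squared-difference monotonicity argument for uniqueness, Gronwall for the sup bound, and the derivative bound combined with integration (the paper phrases this via the mean value theorem, you via the fundamental theorem of calculus) for the Lipschitz estimate. The only cosmetic difference is that you spell out the continuation argument explicitly while the paper cites a reference in which linear growth already yields global existence.
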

\begin{proof} Since the right-hand side function $g$ is continuous and it is of at most linear growth (i.e. (G1) and (G2) are satisfied), Peano's theorem guarantees existence of the solution
	(e.g. see Theorem 70.4, page 292 in \cite{goring}). 
	The uniqueness follows from the monotonicity condition (G3). Namely, let us assume that \eqref{ODE_1_Peano} has two solutions $z=z(t)$ and $x=x(t)$ with the same initial-value $z(a)=x(a)=\eta$. Then for all $t\in [a,b]$
	\begin{displaymath}	
	\frac{d}{dt}(z(t)-x(t))^2=2(z(t)-x(t))(g(t,z(t))-g(t,x(t)))\leq 0.
	\end{displaymath}
	Therefore, the mapping $[a,b]\ni t\mapsto (z(t)-x(t))^2$ is non-increasing and we get for all $t\in [a,b]$
	\begin{displaymath}
	(z(t)-x(t))^2\leq (z(a)-x(a))^2=0,
	\end{displaymath}
	which, together with continuity of $z,x$, implies that $z(t)=x(t)$ for all $t\in [a,b]$.
	
	For all $t\in [a,b]$ by (G2) we get
	\begin{equation}
	|z(t)|\leq |\eta|+\int\limits_a^t|g(s,z(s))|ds\leq |\eta|+K(b-a)+K\int\limits_a^t|z(s)|ds,
	\end{equation}
	and by Gronwall's lemma we obtain \eqref{est_sol_z}. The estimate \eqref{lip_sol_z} follows from (G2), \eqref{est_sol_z} and the mean value theorem. 
\end{proof}

The following result provides an upper bound on the error of explicit Euler method applied to ODEs with monotone and H\"older continuous right-hand side functions.
\begin{lem}
	\label{eul_odes}
	Let us consider the following ordinary differential equation
	\begin{equation}
	\label{ODE_1}
	z'(t)=g(t,z(t)), \quad t\in [a,b], \quad z(a)=\eta,
	\end{equation}
	where $-\infty<a<b<+\infty$, $\eta\in\mathbb{R}$ and $g:[a,b]\times\mathbb{R}\to\mathbb{R}$ satisfies the following conditions:
	\begin{itemize}
		\item [(G1)] $g\in C([a,b]\times\mathbb{R};\mathbb{R})$.
		\item [(G2)] There exists $K>0$ such that for all $(t,y)\in [a,b]\times\mathbb{R}$
		\begin{displaymath}
		|g(t,y)|\leq K(1+|y|).
		\end{displaymath}
		\item [(G3)] For all $t\in [a,b]$, $y_1,y_2\in\mathbb{R}$
		\begin{displaymath}
		(y_1-y_2)(g(t,y_1)-g(t,y_2))\leq 0.
		\end{displaymath}
		\item [(G4)] There exist $L>0$ and $\gamma_1,\gamma_2,\gamma_3\in (0,1]$  such that for all $t_1,t_2\in [a,b]$, $y_1,y_2\in\mathbb{R}$
		\begin{displaymath}
		|g(t_1,y_1)-g(t_2,y_2)|\leq L\Bigl((1+|y_1|+|y_2|)|t_1-t_2|^{\gamma_1}+|y_1-y_2|^{\gamma_2}+|y_1-y_2|^{\gamma_3}\Bigr).
		\end{displaymath}
	\end{itemize}	
	Let us consider the explicit  Euler method based on equidistant discretization. Namely, for $n\in\mathbb{N}$ we set $h=(b-a)/n$, $t_k=a+kh$, $k=0,1,\ldots,n$, and let $y_0\in\mathbb{R}$ be such that $|\eta-y_0|\leq\Delta$. We take
	\begin{equation}
		\label{def_Euler_g}
			y_{k+1}=y_k+h \cdot g(t_k,y_k), \quad k=0,1,\ldots,n-1.
	\end{equation}
	Then the following holds.
	\begin{itemize}
		\item [(i)] There exists $C_1=C_1(a,b,K,\eta)>0$ such that for all $n\in\mathbb{N},\Delta \in [0,+\infty)$  we have 
		\begin{equation}
		\label{eq:711}
			\max\limits_{0\leq k\leq n}|y_k|\leq C_1(1+\Delta).
		\end{equation}	
		\item [(ii)] There exists $C_2=C_2(a,b,L,K,\eta,\gamma_1,\gamma_2,\gamma_3)>0$ such that for all $n\in\mathbb{N},\Delta \in [0,+\infty)$  we have
		\begin{equation}
		\label{est_euler_err}
		\max\limits_{0\leq k\leq n}|z(t_k)-y_k|\leq C_2(\Delta+h^{\gamma_1}+h^{\gamma_2}+h^{\gamma_3}).
		\end{equation}
	\end{itemize}
\end{lem}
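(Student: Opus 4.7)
Part (i) is standard: from (G2), $|y_{k+1}|\le (1+Kh)|y_k|+Kh$. Iterating this linear recursion and using $(1+Kh)^n\le e^{K(b-a)}$ together with $|y_0|\le|\eta|+\Delta$ produces (i) with $C_1=C_1(a,b,K,\eta)$.

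The substance of the lemma is (ii). The obstruction is that (G4) supplies only the H\"older bound $|g(t_k,z(t_k))-g(t_k,y_k)|\le L(|e_k|^{\gamma_2}+|e_k|^{\gamma_3})$, which is too weak to close an $\ell^\infty$ Gronwall recursion. My plan is to pass to the squared error and exploit the monotonicity hypothesis (G3). With $e_k:=z(t_k)-y_k$, $A_k:=g(t_k,z(t_k))-g(t_k,y_k)$, and local truncation $\delta_k:=\int_{t_k}^{t_{k+1}}[g(s,z(s))-g(t_k,z(t_k))]\,ds$, the integral form of $z$ gives $e_{k+1}=e_k+hA_k+\delta_k$, and squaring yields the identity
\[
e_{k+1}^2 = e_k^2 + 2he_k A_k + 2e_k\delta_k + (hA_k+\delta_k)^2,
\]
in which (G3) forces $2he_k A_k\le 0$, killing the only $O(h)$ growth-producing contribution.

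The rest is bookkeeping. Combining (G4) with the Lipschitz bound \eqref{lip_sol_z} on $z$ from Lemma~\ref{odes_exist_sol} gives $|\delta_k|\le Ch(h^{\gamma_1}+h^{\gamma_2}+h^{\gamma_3})$, and Young's inequality $2|e_k\delta_k|\le he_k^2+\delta_k^2/h$ injects only a benign $O(h)$ factor in front of $e_k^2$. The delicate term is $h^2 A_k^2$: the uniform bound $|A_k|\le\tilde C$ coming from (G2) and part~(i) already makes $\sum_k h^2 A_k^2=O(h)$, which after square-rooting would leave an $O(h^{1/2})$ residue. To recover the sharper rate $h^{\min\gamma_i}$ asserted in (ii) one bootstraps: inductively assume $|e_k|\le M:=C_0(\Delta+h^{\min\gamma_i})$, feed this back into the H\"older bound to get $h^2 A_k^2\le 2L^2 M^{2\min\gamma_i}h^2$, and verify that the cumulative contribution $O(hM^{2\min\gamma_i})$ is absorbed into $M^2$; the self-consistency condition reduces to $2\gamma(1-\gamma)\le 1$ for $\gamma=\min\gamma_i\in(0,1]$, which holds unconditionally. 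Discrete Gronwall applied to $e_{k+1}^2\le (1+Ch)e_k^2+Ch(h^{\gamma_1}+h^{\gamma_2}+h^{\gamma_3})^2$, together with $(1+Ch)^n\le e^{C(b-a)}$, then closes the induction and produces (ii).

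\textbf{Main obstacle.} The real hurdle is the self-referential nature of the H\"older estimate on $A_k$: the sharp bound is expressed in terms of the very error one is trying to control, so one has to propagate the inductive hypothesis on $|e_k|$ through the $h^2A_k^2$ term without letting the constant depend on $\Delta$, and track the joint dependence of $C_2$ on $\eta,a,b,K,L,\gamma_1,\gamma_2,\gamma_3$. Absent the monotonicity (G3) this program collapses entirely, which is precisely why a classical Lipschitz-free but monotone theory is required here rather than the standard Butcher--Hairer framework.
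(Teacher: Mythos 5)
Your plan for part~(ii) is genuinely different from the paper's proof and is more delicate. The paper never squares the discrete error: for each step it introduces the auxiliary solution $z_k$ of the \emph{exact} ODE \eqref{def_local_zk} with initial value $z_k(t_k)=y_k$, splits
\[
|z(t_{k+1})-y_{k+1}|\leq |z(t_{k+1})-z_k(t_{k+1})|+|z_k(t_{k+1})-y_{k+1}|,
\]
observes that (G3) makes the \emph{continuous flow} non-expanding so that the first term is $\leq|e_k|$, and bounds the second (a one-step truncation integral $\int_{t_k}^{t_{k+1}}[g(s,z_k(s))-g(t_k,y_k)]ds$) by $C(1+\Delta)h(h^{\gamma_1}+h^{\gamma_2}+h^{\gamma_3})$ directly from (G4) and the local Lipschitz bound on $z_k$. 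Telescoping in $|e_k|$ then gives the claimed rate with no $h^{1/2}$ artefact and no fixed-point argument. Your route instead squares $e_{k+1}=e_k+hA_k+\delta_k$ and lets (G3) kill $2he_kA_k$, which is sound, but then the $h^2A_k^2$ term genuinely produces an $O(h^{1/2})$ residue that spoils the rate whenever $\min\gamma_i>1/2$; you recognize this and propose a bootstrap. Your self-consistency condition $2\gamma(1-\gamma)\leq 1$ with $\gamma=\min(\gamma_2,\gamma_3)$ is the right one and indeed holds with room to spare (the maximum of $2\gamma(1-\gamma)$ is $1/2$). However, the bootstrap as sketched is more delicate than you indicate: unrolling the recursion gives only the scalar inequality $E^2\lesssim \Delta^2+H^2+h\,E^{2\gamma}$ for $E:=\max_k|e_k|$ and $H:=h^{\gamma_1}+h^{\gamma_2}+h^{\gamma_3}$, and extracting $E\lesssim\Delta+H$ from it requires analyzing the two branches $E\lesssim\Delta+H$ versus $E\lesssim (Ch)^{1/(2-2\gamma)}$, comparing the latter to $h^{\gamma}\lesssim H$ using exactly $1-2\gamma(1-\gamma)>0$, splitting into $E\leq 1$ and $E>1$, and then separately disposing of the finitely many $n$ for which $h$ exceeds the threshold this imposes (using part~(i) and the a priori bound on $z$) while tracking that $C_2$ stays independent of $\Delta$ and $n$. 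None of that is impossible, but it is real work your sketch elides; the paper's local-flow decomposition is shorter precisely because it never creates the $h^2A_k^2$ term that forces the bootstrap. (It is worth noting the paper itself switches to a squared-error argument \`a la yours only in the proof of Theorem~\ref{rate_of_conv_expl_Eul} for $j\geq 1$, where the delay coupling makes the $h^{1/2}$ term unavoidable and the statement explicitly carries it.)
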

\begin{proof}  We have that
	\begin{equation}
	|y_{k+1}|\leq (1+hK)|y_k|+hK, \quad k=0,1,\ldots,n-1
	\end{equation}
	and $|y_0|\leq |\eta|+\Delta$. Hence, by the discrete version of Gronwall's lemma 
	we get that for all $k=0,1,\ldots,n$
	\begin{equation}
	\label{EST_yk}
	|y_k|\leq e^{K(b-a)}(|\eta|+\Delta+1)-1\leq C_1(1+\Delta),
	\end{equation}
	 where $C_1=\max\{e^{K(b-a)}(|\eta|+1)-1,e^{K(b-a)}\}$. This proves \eqref{eq:711}.
	
	For $k=0,1,\ldots,n-1$ we consider the following local ordinary differential equation
	\begin{equation}
		\label{def_local_zk}
			z_k'(t)=g(t,z_k(t)), \quad t\in [t_k,t_{k+1}], \quad z_k(t_k)=y_k.
	\end{equation}
	By (\ref{EST_yk}) we get for all $t\in [t_k,t_{k+1}]$ that
	\begin{equation}
	|z_k(t)|\leq |y_k|+\int\limits_{t_k}^t |g(s,z_k(s))|ds\leq C_1(1+\Delta)+K(b-a)+K\int\limits_{t_k}^t |z_k(s)|ds,
	\end{equation}
	and by the Gronwall's lemma we obtain
	\begin{equation}
	\label{norm_sup_zk}
	\sup\limits_{t\in [t_k,t_{k+1}]}|z_k(t)|\leq C_2(1+\Delta),
	\end{equation}
	where $C_2=(C_1+K(b-a))e^{K(b-a)}$. Therefore, for all $t\in [t_k,t_{k+1}]$
	\begin{equation}
	\label{est_zkyk}
	|z_k(t)-y_k|\leq\int\limits_{t_k}^t |g(s,z_k(s))|ds\leq hK\Bigl(1+\sup\limits_{t\in [t_k,t_{k+1}]}|z_k(t)|\Bigr)\leq C_3(1+\Delta) h,
	\end{equation}
	with $C_3=(1+C_2)K$. Now, we have that
	\begin{equation}
	\label{EST_INEQ1}
	|z(t_{k+1})-y_{k+1}|\leq |z(t_{k+1})-z_k(t_{k+1})|+|z_k(t_{k+1})-y_{k+1}|,
	\end{equation}
	for $k=0,1,\ldots,n-1$. Note that for all $t\in [t_k,t_{k+1}]$, due to the assumption (G3), the following holds 
	\begin{eqnarray}
	(z(t)-z_{k}(t))^2&=&(z(t_k)-y_k)^2+2\cdot\int\limits_{t_k}^t (z(s)-z_k(s))(g(s,z(s))-g(s,z_k(s)))ds\notag\\
		&\leq& (z(t_k)-y_k)^2.
	\end{eqnarray}
	Hence,  we arrive at
	\begin{equation}
	\label{diff_zzk}
	|z(t_{k+1})-z_k(t_{k+1})|\leq |z(t_k)-y_k|.
	\end{equation}
	We now estimate the second term in (\ref{EST_INEQ1}). We have by \eqref{def_Euler_g}, \eqref{EST_yk}, \eqref{def_local_zk}, \eqref{norm_sup_zk}, (G4), and \eqref{est_zkyk} that 
	\begin{eqnarray*}
	|z_k(t_{k+1})-y_{k+1}|&=&\Bigl|z_k(t_k)-y_k+\int\limits_{t_k}^{t_{k+1}}\Bigl(g(s,z_k(s))-g(t_k,y_k)\Bigr)ds\Bigl|\\
&\leq&\int\limits_{t_k}^{t_{k+1}}|g(s,z_k(s))-g(t_k,y_k)|ds
	\end{eqnarray*}
	\begin{displaymath}
	\leq L\int\limits_{t_k}^{t_{k+1}}\Bigl((1+|z_k(s)|+|y_k|)|s-t_k|^{\gamma_1}+|z_k(s)-y_k|^{\gamma_2}+|z_k(s)-y_k|^{\gamma_3}\Bigr)ds
	\end{displaymath}
	\begin{equation}
	\label{EST_INEQ21}
	\leq \tilde C_4 h \Bigl((1+\Delta)h^{\gamma_1}+(1+\Delta)^{\gamma_2}h^{\gamma_2}+(1+\Delta)^{\gamma_3}h^{\gamma_3}\Bigr),
	\end{equation}
	where $\tilde C_4=L\max\{(1+C_1+C_2)/(1+\gamma_1),C_3^{\gamma_2},C_3^{\gamma_3}\}$. Since $(1+\Delta)^{\gamma_i}\leq 2(1+\Delta)$, $i=1,2$, we obtain that 
	\begin{equation}
	\label{EST_INEQ2}
	|z_k(t_{k+1})-y_{k+1}|\leq C_4h(1+\Delta)(h^{\gamma_1}+h^{\gamma_2}+h^{\gamma_3}),
	\end{equation}
	where $C_4=2L\tilde C_4$. 

	Let us denote
	\begin{equation}
	e_k=z(t_k)-y_k, \quad k=0,1,\ldots,n.
	\end{equation}
	Of course $|e_0|\leq\Delta$. By \eqref{EST_INEQ1}, \eqref{diff_zzk}, and \eqref{EST_INEQ2} we have the following recursive inequality
	\begin{equation}
	|e_{k+1}|\leq |e_k|+C_4(1+\Delta) h (h^{\gamma_1}+h^{\gamma_2}+h^{\gamma_3}),
	\end{equation}
	for $k=0,1,\ldots,n-1$. It is easy to see that
	\begin{equation}
	|e_k|\leq \Delta+k C_4 (1+\Delta) h (h^{\gamma_1}+h^{\gamma_2}+h^{\gamma_3})\leq \Delta+C_4(b-a)(h^{\gamma_1}+h^{\gamma_2}+h^{\gamma_3}),
	\end{equation}
	\begin{equation}
	\leq C(\Delta+h^{\gamma_1}+h^{\gamma_2}+h^{\gamma_3}),
	\end{equation}
	for all $k=0,1,\ldots,n$, where $C=6(1+b-a)\max\{1,(b-a)C_4\}$. This ends the proof of \eqref{est_euler_err}, completing the proof of lemma. 
\end{proof}
In the following lemma we show, by using the results above, that the delay differential equation \eqref{dde_1} has unique solution under assumptions (F1)-(F3). Note that the assumptions are weaker than those known from the standard literature. Namely, we use only monotonicity and local H\"older condition for the right-hand side function $f$.
\begin{lem} 
\label{prop_sol_z}
Let $\eta\in\mathbb{R}$ and let $f$ satisfy (F1)-(F3). Then the equation \eqref{dde_1} has a unique continuously differentiable solution that exists for any $t\geq 0$. Moreover,
if we denote by $\phi_n=\phi_n(t)$ the solution of \eqref{dde_1} on the interval $[nt_{cr},(n+1)t_{cr}]$, then
for all $n\in\mathbb{N}_0$ there exist $K_0,K_1,\ldots,K_n\geq 0$ such that  
	\begin{equation}
		\sup\limits_{nt_{cr}\leq t\leq (n+1)t_{cr}}|\phi_n(t)|\leq K_n,
	\end{equation}
	and, for all $t,s\in [nt_{cr},(n+1)t_{cr}]$ 
	\begin{equation}
		|\phi_n(t)-\phi_n(s)|\leq \bar K_n |t-s|,
	\end{equation}
	with $\bar K_n=K(1+K_{n-1})(1+K_{n}),$ where $K_{-1}:=|\eta|$. 
\end{lem}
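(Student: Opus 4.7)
The plan is to construct $\phi_n$ inductively on $n$ and read all bounds off Lemma~\ref{odes_exist_sol}. On each slice $[nt_{cr},(n+1)t_{cr}]$ the delayed argument $z(t-t_{cr})$ is already determined (either by the constant initial segment for $n=0$, or by $\phi_{n-1}$ via the inductive hypothesis), so \eqref{dde_1} collapses to an ODE of the form $z'(t)=g_n(t,z(t))$ to which Lemma~\ref{odes_exist_sol} applies directly.

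For the base case $n=0$, set $g_0(t,y):=f(t,y,\eta)$. Continuity (G1) comes from (F1), monotonicity (G3) from (F3), and (F2) gives the linear growth estimate $|g_0(t,y)|\leq K(1+|\eta|)(1+|y|)$, so (G1)-(G3) hold with effective growth constant $K(1+K_{-1})$ where $K_{-1}=|\eta|$. Lemma~\ref{odes_exist_sol} then supplies a unique $C^1$ solution $\phi_0$ on $[0,t_{cr}]$ with sup-bound $K_0$ obtained from \eqref{est_sol_z} and Lipschitz constant $\bar K_0=K(1+K_{-1})(1+K_0)$ obtained from \eqref{lip_sol_z}. For the inductive step, assume $\phi_0,\ldots,\phi_{n-1}$ have been constructed with bounds $K_0,\ldots,K_{n-1}$, and set $g_n(t,y):=f(t,y,\phi_{n-1}(t-t_{cr}))$ on $[nt_{cr},(n+1)t_{cr}]$ with initial value $\phi_{n-1}(nt_{cr})$. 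Since $\phi_{n-1}$ is continuous with $|\phi_{n-1}|\leq K_{n-1}$, (F2) yields $|g_n(t,y)|\leq K(1+K_{n-1})(1+|y|)$, while (F1) and (F3) transfer verbatim. Lemma~\ref{odes_exist_sol} then gives a unique $C^1$ solution $\phi_n$ on this slice with $\phi_n(nt_{cr})=\phi_{n-1}(nt_{cr})$, together with the bounds $K_n$ and $\bar K_n=K(1+K_{n-1})(1+K_n)$ claimed in the lemma.

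Gluing the slices produces a function $z$ continuous on $[-t_{cr},\infty)$ and $C^1$ on each closed piece $[nt_{cr},(n+1)t_{cr}]$; at every junction $t=nt_{cr}$ with $n\geq 1$, continuity of the delayed argument $z(\cdot-t_{cr})$ across $(n-1)t_{cr}$ together with continuity of $f$ forces $z'(nt_{cr}^{\pm})=f(nt_{cr},z(nt_{cr}),z((n-1)t_{cr}))$, so the one-sided derivatives agree and $z$ is $C^1$ on $[0,\infty)$. Global uniqueness is immediate from the same induction, since uniqueness on each slice is already supplied by Lemma~\ref{odes_exist_sol}. The main obstacle I expect is conceptual rather than technical: under (F1)-(F3) alone one cannot invoke any Picard-type argument, so uniqueness must rest on the monotonicity-driven $(z-\bar z)^2$ differential inequality, but this is precisely what is built into Lemma~\ref{odes_exist_sol}. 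What remains is then purely bookkeeping, namely tracking how the effective linear-growth constant $K$ acquires the extra factor $(1+K_{n-1})$ at each inductive stage.
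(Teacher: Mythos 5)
Your proposal is correct and follows essentially the same route as the paper: an induction over the slices $[nt_{cr},(n+1)t_{cr}]$, at each stage freezing the delayed argument and invoking Lemma~\ref{odes_exist_sol} with effective growth constant $K(1+K_{n-1})$ to obtain existence, uniqueness, the sup-bound $K_n$, and the Lipschitz constant $\bar K_n=K(1+K_{n-1})(1+K_n)$, followed by the same matching of one-sided derivatives at the junction points to conclude global $C^1$ regularity.
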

\begin{proof} We proceed by induction with respect to $n$.

For $n=0$, the equation \eqref{dde_1} can be written as
\begin{equation}
\label{dde_10}
	z'(t)=f(t,z(t),\eta), \quad t\in [0,t_{cr}],
\end{equation}
with the initial condition $z(0)=\eta$. Denoting by 
\begin{equation}
	g_0(t,y)=f(t,y,\eta), \quad t\in [0,t_{cr}], y\in\mathbb{R},
\end{equation}
we get, by the properties of $f$, that $g_0\in C([0,t_{cr}]\times\mathbb{R})$, 
\begin{equation}
	|g_0(t,y)|\leq\hat K_0(1+|y|),
\end{equation}
with $\hat K_0=K(1+|\eta|)$, and
\begin{equation}
	(y_1-y_2)(g_0(t,y_1)-g_0(t,y_2))\leq 0, \quad \textrm{ for all } y_1,y_2\in\mathbb{R}.
\end{equation}
Therefore, by Lemma \ref{odes_exist_sol} 
we get that there exists a unique continuously differentiable solution $\phi_0:[0,t_{cr}]\to\mathbb{R}$ of the equation \eqref{dde_10}, such that
\begin{displaymath}
	\sup\limits_{t\in [0,t_{cr}]}|\phi_0(t)|\leq K_0,
\end{displaymath}
where 
$$
K_0=(|\eta|+\hat K_0 t_{cr})e^{\hat K_0 t_{cr}}=(|\eta|+K(1+|\eta|)t_{cr})e^{K(1+|\eta|) t_{cr}}\geq 0,
$$ 
and for all $t,s\in [0,t_{cr}]$
\begin{displaymath}
	|\phi_0(t)-\phi_0(s)|\leq \bar K_0 |t-s|,
\end{displaymath}
where 
$$
\bar K_0=\hat K_0(1+K_0)=K(1+K_{-1})(1+K_0)
$$ depends only on $\eta,K,t_{cr}$. 

Let us now assume that there exists $n\in\mathbb{N}_{0}$ such that the statement of the lemma holds for the solution $\phi_n:[nt_{cr},(n+1)t_{cr}]\to\mathbb{R}$. Consider the equation
\begin{equation}
\label{dde_1np1}
	z'(t)=f(t,z(t),\phi_n(t-t_{cr})), \quad t\in [(n+1)t_{cr},(n+2)t_{cr}],
\end{equation}
with the initial condition $z((n+1)t_{cr})=\phi_n((n+1)t_{cr})$. Let
\begin{equation}
	g_{n+1}(t,y)=f(t,y,\phi_n(t-t_{cr})), \quad t\in [(n+1)t_{cr},(n+2)t_{cr}], y\in\mathbb{R}.
\end{equation}
We get by the inductive assumption and from the properties of $f$ that $g_{n+1}\in C([(n+1)t_{cr}],(n+2)t_{cr}]\times\mathbb{R};\mathbb{R})$, for all $y\in\mathbb{R}$ we have
\begin{equation}
	|g_{n+1}(t,y)|\leq K(1+\sup\limits_{nt_{cr}\leq t\leq (n+1)t_{cr}}|\phi_n(t)|)(1+|y|)\leq \hat K_{n+1}(1+|y|),
\end{equation}
with $\hat K_{n+1}=K(1+K_n)$, and
\begin{equation}
	(y_1-y_2)(g_{n+1}(t,y_1)-g_{n+1}(t,y_2))\leq 0, \quad y_1,y_2\in\mathbb{R}.
\end{equation}
Hence, again by Lemma \ref{odes_exist_sol} we get that there exists a unique continuously differentiable solution $\phi_{n+1}:[(n+1)t_{cr},(n+2)t_{cr}]\to\mathbb{R}$ of the equation \eqref{dde_1np1}, such that
\begin{displaymath}
	\sup\limits_{t\in [(n+1)t_{cr},(n+2)t_{cr}]}|\phi_{n+1}(t)|\leq K_{n+1},
\end{displaymath}
where 
$$
K_{n+1}=(K_n+\hat K_{n+1} t_{cr})e^{\hat K_{n+1} t_{cr}}=(K_n+K(1+K_n)t_{cr})e^{K(1+K_n)t_{cr}}\geq 0,
$$ 
and for all $t,s\in [(n+1)t_{cr},(n+2)t_{cr}]$ we have
\begin{displaymath}
	|\phi_{n+1}(t)-\phi_{n+1}(s)|\leq \bar K_{n+1} |t-s|,
\end{displaymath}
where $\bar K_{n+1}=\hat K_{n+1}(1+K_{n+1})=K(1+K_n)(1+K_{n+1})$. 

From the above inductive construction we see that the solution of \eqref{dde_1} is continuous. Moreover, due to the continuity of $f$, $\phi_n$, and $\phi_{n-1}$ we get  for any $n\in\mathbb{N}_0$ that
\begin{eqnarray}
\lim\limits_{t\to (n+1)t_{cr}-}z'(t)&=&\lim\limits_{t\to (n+1)t_{cr}-} \phi'_{n}(t)=\lim\limits_{t\to (n+1)t_{cr}-}f(t,\phi_n(t),\phi_{n-1}(t-t_{cr}))\notag\\
&=&f((n+1)t_{cr},\phi_n((n+1)t_{cr}),\phi_{n-1}(nt_{cr}))\notag\\
&=&f((n+1)t_{cr},\phi_{n+1}((n+1)t_{cr}),\phi_{n}(nt_{cr}))\notag\\
&=&\lim\limits_{t\to (n+1)t_{cr}+}f(t,\phi_{n+1}(t),\phi_{n}(t-t_{cr}))\notag\\
&=&\lim\limits_{t\to (n+1)t_{cr}+}\phi'_{n+1}(t)=\lim\limits_{t\to (n+1)t_{cr}+}z'(t).\notag
\end{eqnarray}
Hence, the solution of \eqref{dde_1} is continuously differentiable.

The proof is completed.
\end{proof}
\begin{lem} 
\label{props_gn}
Let $\eta\in\mathbb{R}$ and let $f$ satisfy (F1)-(F4). For any $n\in\mathbb{N}_0$ consider the function $g_n:[nt_{cr},(n+1)t_{cr}]\times\mathbb{R}\to\mathbb{R}$ given by
\begin{equation}
	g_n(t,y)=f(t,y,\phi_{n-1}(t-t_{cr})), 
\end{equation}
where $\phi_n=\phi_{n-1}(t)$ is the solution of (\ref{dde_1}) on the interval $[(n-1)t_{cr},nt_{cr}]$ and $\phi_{-1}(t):=\eta$ for all $t\in [-t_{cr},0]$.
Then
\begin{itemize}
	\item [(i)] $g_n\in C([nt_{cr},(n+1)t_{cr}]\times\mathbb{R}; \mathbb{R})$.
	\item [(ii)] For all $(t,y)\in [nt_{cr},(n+1)t_{cr}]\times\mathbb{R}$
		\begin{displaymath}
			|g_n(t,y)|\leq\hat K_{n}(1+|y|),
		\end{displaymath}
		where $K_{-1}=|\eta|$, $\hat K_n=K(1+K_{n-1})$ and $K_{n-1}$ is the constant from Lemma \ref{prop_sol_z}.
	\item [(iii)]	For all $t\in [nt_{cr},(n+1)t_{cr}], y_1,y_2\in\mathbb{R}$
	\begin{displaymath}
		(y_1-y_2)(g_n(t,y_1)-g_n(t,y_2))\leq 0.
	\end{displaymath}
	\item [(iv)] For all $t_1,t_2\in [nt_{cr},(n+1)t_{cr}], y_1,y_2\in\mathbb{R}$
	\begin{displaymath}
		|g_n(t_1,y_1)-g_n(t_2,y_2)|\leq \hat L_n \Bigl((1+|y_1|+|y_2|)\cdot |t_1-t_2|^{\alpha}+|y_1-y_2|^{\beta}+|y_1-y_2|^{\gamma}\Bigr),
	\end{displaymath}
	where $\hat L_n=L(1+2K_{n-1}+t_{cr}^{1-\alpha}\bar K_{n-1})$ and $\bar K_{-1}:=0$, $K_{-1}:=|\eta|$.
\end{itemize}
\end{lem}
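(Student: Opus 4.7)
The plan is to verify (i)--(iv) one at a time, reducing each to the corresponding assumption (F1)--(F4) on $f$ combined with the estimates on $\phi_{n-1}$ supplied by Lemma \ref{prop_sol_z}. For $n=0$ the conventions $\phi_{-1}\equiv \eta$, $K_{-1}=|\eta|$, $\bar K_{-1}=0$ collapse the argument below to the same template, so I will not separate that case.

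The first three items are essentially immediate. Part (i) holds because $g_n$ is the composition of the continuous $f$ from (F1) with the map $(t,y)\mapsto(t,y,\phi_{n-1}(t-t_{cr}))$, in which $\phi_{n-1}$ is $C^1$ (hence continuous) on its domain by Lemma \ref{prop_sol_z}. Part (iii) is just (F3) applied at $z=\phi_{n-1}(t-t_{cr})$, the slot for the delay variable being frozen in $g_n$. Part (ii) follows from (F2) together with the uniform bound $|\phi_{n-1}(t-t_{cr})|\leq K_{n-1}$ of Lemma \ref{prop_sol_z}, which absorbs the $(1+|z|)$ factor into the constant $\hat K_n=K(1+K_{n-1})$.

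Part (iv) is the only step carrying any content, but it is still routine. I would apply (F4) at $(t_i,y_i,z_i)$ with $z_i=\phi_{n-1}(t_i-t_{cr})$ and then use both Lemma \ref{prop_sol_z} estimates on $\phi_{n-1}$: the sup-bound $|z_i|\leq K_{n-1}$ and the Lipschitz bound $|z_1-z_2|\leq \bar K_{n-1}|t_1-t_2|$. Three of the four terms produced by (F4) already have the required shape, contributing coefficients $L(1+2K_{n-1})$, $L$, and $L(1+2K_{n-1})$ in front of $(1+|y_1|+|y_2|)|t_1-t_2|^\alpha$, $|y_1-y_2|^\beta$, and $|y_1-y_2|^\gamma$ respectively. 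The only maneuver is for the fourth term: after the Lipschitz substitution it carries a factor $|t_1-t_2|$ rather than $|t_1-t_2|^\alpha$, and I would use $|t_1-t_2|=|t_1-t_2|^{1-\alpha}|t_1-t_2|^\alpha\leq t_{cr}^{1-\alpha}|t_1-t_2|^\alpha$, which holds because $t_1,t_2\in[nt_{cr},(n+1)t_{cr}]$ and $\alpha\in(0,1]$. Adding the resulting contribution $L\bar K_{n-1}t_{cr}^{1-\alpha}$ to the coefficient of $(1+|y_1|+|y_2|)|t_1-t_2|^\alpha$ and taking the common upper bound of the three coefficients produces exactly $\hat L_n=L(1+2K_{n-1}+t_{cr}^{1-\alpha}\bar K_{n-1})$. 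There is no real obstacle anywhere; the lemma is bookkeeping that will let me feed Lemma \ref{eul_odes} with an ODE whose right-hand side satisfies its hypotheses on each subinterval $[nt_{cr},(n+1)t_{cr}]$.
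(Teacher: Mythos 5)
Your proposal is correct and matches the paper's argument in all essential respects: parts (i)--(iii) are read directly off (F1)--(F3) and the bound $|\phi_{n-1}|\leq K_{n-1}$ from Lemma~\ref{prop_sol_z}, and part (iv) is obtained by applying (F4) with $z_i=\phi_{n-1}(t_i-t_{cr})$, invoking both the sup-norm and Lipschitz estimates from Lemma~\ref{prop_sol_z}, and absorbing the stray $|t_1-t_2|$ factor via $|t_1-t_2|\leq t_{cr}^{1-\alpha}|t_1-t_2|^{\alpha}$, exactly reproducing $\hat L_n=L(1+2K_{n-1}+t_{cr}^{1-\alpha}\bar K_{n-1})$. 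You even make explicit the interpolation step that the paper leaves implicit in its final inequality.
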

\begin{proof} Conditions (i), (ii), and (iii) follow by Lemma \ref{prop_sol_z}. 

By the assumption (F4) and Lemma \ref{prop_sol_z} we get for all $t_1,t_2\in [nt_{cr},(n+1)t_{cr}]$, $y_1,y_2\in\mathbb{R}$ that
\begin{eqnarray*}		
	|g_n(t_1,y_1)-g_n(t_2,y_2)|&=&|f(t_1,y_1,\phi_{n-1}(t_1-t_{cr}))-f(t_2,y_2,\phi_{n-1}(t_2-t_{cr}))|\\
	&\leq& L\Bigl( (1+|y_1|+|y_2|)\cdot(1+|\phi_{n-1}(t_1-t_{cr})|\\
	&&+|\phi_{n-1}(t_2-t_{cr})|)\cdot|t_1-t_2|^{\alpha}\notag+|y_1-y_2|^{\beta}\\
	&&+(1+|\phi_{n-1}(t_1-t_{cr})|+|\phi_{n-1}(t_2-t_{cr})|)\cdot |y_1-y_2|^{\gamma}\\
	&&+(1+|y_1|+|y_2|)\cdot |\phi_{n-1}(t_1-t_{cr})-\phi_{n-1}(t_2-t_{cr})|\Bigr)\\
	&\leq& L\Bigl( (1+|y_1|+|y_2|)\cdot (1+2K_{n-1}) \cdot|t_1-t_2|^{\alpha}\notag\\
	&&+|y_1-y_2|^{\beta}+(1+2K_{n-1})\cdot |y_1-y_2|^{\gamma}\\
	&&+\bar K_{n-1}(1+|y_1|+|y_2|)\cdot |t_1-t_2|\Bigr)\\
	&\leq& \hat L_n\Bigl( (1+|y_1|+|y_2|)\cdot |t_1-t_2|^{\alpha}+|y_1-y_2|^{\beta}+|y_1-y_2|^{\gamma}\Bigr).
\end{eqnarray*}

\end{proof}

Now we are ready to prove Theorem \ref{rate_of_conv_expl_Eul}.
\\
\begin{proof}[Proof of Theorem \ref{rate_of_conv_expl_Eul}]
On the interval $[0,t_{cr}]$ we approximate the solution of (\ref{dde_1}) by the explicit Euler method 
\begin{eqnarray}
	y_0^0&=&\eta,\\
	y_{k+1}^0&=&y_k^0+h\cdot g_0(t_k^0,y_k^0), \quad k=0,1,\ldots, N-1,
\end{eqnarray}
where $g_0(t,y)=f(t,y,\eta)$. Applying Lemmas \ref{props_gn} and \ref{eul_odes} to $\eta:=\eta$, $g:=g_0$, $[a,b]:=[0,t_{cr}]$, $\Delta:=0$ we get that
\begin{equation}
\label{est_euler_0tcr}
		\max\limits_{0\leq k\leq N}|\phi_0(t_k^0)-y_k^0|\leq C_0 (h^{\alpha}+h^{\beta}+h^{\gamma}),
\end{equation}
where $C_0=C_0(t_{cr},L,K,\eta,\alpha,\beta,\gamma)\geq 0$, and
\begin{equation}
\label{est_euler_yk1}
	|y^0_k|\leq \tilde K_0, \quad k=0,1,\ldots,N,
\end{equation}
where $\tilde K_0=\tilde K_0(t_{cr},K,\eta)\geq 0$. 

In $[t_{cr},2t_{cr}]$ we consider the following differential equation
\begin{equation}
\label{dde_12}
	z'(t)=g_1(t,z(t)), \quad t\in [t_{cr},2t_{cr}],
\end{equation}
with the initial value $z(t_{cr})=\phi_0(t_{cr})=\phi_0(t^0_N)$ and $g_1(t,y)=f(t,y,\phi_0(t-t_{cr}))$. We approximate \eqref{dde_12} by the auxiliary Euler scheme 
\begin{eqnarray}
\label{aux_expl_euler_1}
	\tilde y_0^1&:=&y^{1}_0=y_N^0,\\
\label{aux_expl_euler_11}	
	\tilde y_{k+1}^1&=&\tilde y_k^1+h\cdot g_1(t_k^1,\tilde y_k^1), \quad k=0,1,\ldots, N-1.
\end{eqnarray}
By \eqref{est_euler_0tcr} we have that
\begin{equation}
	|\phi_1(t_0^1)-\tilde y_0^1|=|\phi_0(t_N^0)-y_N^0|\leq C_0(h^{\alpha}+h^{\beta}+h^{\gamma}).
\end{equation}
Applying Lemmas \ref{props_gn}, \ref{prop_sol_z} and \ref{eul_odes} to $\eta:=\phi_0(t^0_N)$, $g:=g_1$, $[a,b]:=[t_{cr},2t_{cr}]$, $\Delta:=C_0 (h^{\alpha}+h^{\beta}+h^{\gamma})$ we get that
\begin{equation}
	|\phi_0(t^0_N)|\leq K_0,
\end{equation}
and
\begin{equation}
\label{est_phi1_tyk1}
		\max\limits_{0\leq k\leq N}|\phi_1(t_k^1)-\tilde y_k^1|\leq \tilde C_1 (h^{\alpha}+h^{\beta}+h^{\gamma}),
\end{equation}
where $\tilde C_1=\tilde C_1(t_{cr},L,K,\eta,\alpha,\beta,\gamma)\geq 0$ that, in particular, depends on the initial value $\eta$ of the  equation \eqref{dde_1}. Let us denote by
\begin{displaymath}
	e_k^1=\tilde y_k^1-y_k^1, \quad k=0,1,\ldots,N,
\end{displaymath}
where we have that $e_0^1=\tilde y_0^1-y_0^1=0$. From \eqref{aux_expl_euler_11} and  \eqref{expl_euler_11} we have for $k=0,1,\ldots,N-1$ that
\begin{equation}
\label{rek_ek1}
	e_{k+1}^1=e_k^1+h\mathcal{R}^1_k+h\mathcal{L}^1_k,
\end{equation}
where
\begin{displaymath}
	\mathcal{R}^1_k=f(t_k^1,\tilde y_k^1,\phi_0(t_k^0))-f(t_k^1,y_k^1,\phi_0(t_k^0))
\end{displaymath}
and
\begin{displaymath}
	\mathcal{L}^1_k=f(t_k^1,y_k^1,\phi_0(t_k^0))-f(t_k^1,y_k^1,y_k^0).
\end{displaymath}
From \eqref{rek_ek1} we obtain that
\begin{displaymath}
	(e_{k+1}^1-h\mathcal{L}^1_k)^2=(e_k^1+h\mathcal{R}^1_k)^2,
\end{displaymath}
which, together with the assumption (F3), implies
\begin{equation}
	(e_{k+1}^1)^2\leq (e_{k}^1)^2+h^2(\mathcal{R}^1_k)^2+2he^{1}_{k+1}\mathcal{L}^1_k, \quad k=0,1,\ldots,N-1.
\end{equation}
Moreover, 
\begin{displaymath}
	e_{k+1}^1\mathcal{L}^1_k\leq\frac{1}{2}\Bigl((e^1_{k+1})^2+(\mathcal{L}^1_k)^2\Bigr),
\end{displaymath}
hence
\begin{equation}
	(e_{k+1}^1)^2\leq (e_{k}^1)^2+h^2(\mathcal{R}^1_k)^2+h(e^{1}_{k+1})^2+h(\mathcal{L}^1_k)^2, \quad k=0,1,\ldots,N-1.
\end{equation}
Since $0<1/(1-h)\leq 1+2h\leq 2$ for $h\in (0,1/2)$, we have that
\begin{equation}
\label{rec_eq_11}
	(e_{k+1}^1)^2\leq (1+2h)(e_{k}^1)^2+2h^2(\mathcal{R}^1_k)^2+2h(\mathcal{L}^1_k)^2, \quad k=0,1,\ldots,N-1.
\end{equation}
Recall a well known fact that for all $\varrho\in (0,1]$ and $x\in\mathbb{R}$ it holds
	\begin{equation}
		|x|^{\varrho}\leq 1+|x|.
	\end{equation}
Then by the assumption (F4), Lemma \ref{prop_sol_z} and \eqref{est_euler_0tcr} we have the following estimates
\begin{equation}
\label{rec_eq_12}
	|\mathcal{R}_k^1|\leq L\Bigl(|e_k^1|^{\beta}+(1+2\sup\limits_{0\leq t\leq t_{cr}}|\phi_0(t)|)\cdot |e_k^1|^{\gamma}\Bigr)\leq c_1(1+|e_k^1|),
\end{equation}
where $c_1=c_0(L,K_0)\geq 0$, and
\begin{equation}
\label{est_mathcal_L}
	|\mathcal{L}_k^1|\leq L(1+2|y_k^1|)\cdot |\phi_0(t_k^0)-y_k^0|\leq LC_0(1+2|y_k^1|)(h^{\alpha}+h^{\beta}+h^{\gamma}).
\end{equation}
Furthermore, it holds that for all $k=0,1,\ldots,N-1$
\begin{displaymath}
	|y_{k+1}^1|\leq |y_k^1|+h\cdot |f(t_k^1,y_k^1,y_k^0)|\leq |y_k^1|+hK(1+|y_k^1|)(1+|y_k^0|)
\end{displaymath}
\begin{displaymath}
\leq |y_k^1|+hK(1+\tilde K_0)(1+|y_k^1|)=(1+h\tilde c_1)|y_k^1|+h\tilde c_1,
\end{displaymath}
where $\tilde c_1=\tilde c_1(K,\tilde K_0)\geq 0$ and $|y^1_0|=|y_N^0|\leq \tilde K_0$. By using discrete Gronwall's inequality  we obtain
\begin{equation}
\label{est_euler_ykj}
	|y_k^1|\leq \tilde K_1, \quad k=0,1,\ldots,N.
\end{equation}
Therefore, by \eqref{est_euler_ykj} and \eqref{est_mathcal_L} we obtain for all $k=0,1,\ldots,N$
\begin{equation}
\label{rec_eq_13}
	|\mathcal{L}_k^1|\leq \bar c_1(h^{\alpha}+h^{\beta}+h^{\gamma}),
\end{equation}
with $\bar c_1$ independent of $N$. From \eqref{rec_eq_11}, \eqref{rec_eq_12}, and \eqref{rec_eq_13} we get for sufficiently large $N$ and $k=0,1,\ldots,N-1$ that
\begin{equation}
	(e_{k+1}^1)^2\leq (1+3h)(e_k^1)^2+D_1h^2+D_2h(h^{2\alpha}+h^{2\beta}+h^{2\gamma}),
\end{equation}
where $D_1,D_2\geq 0$ are independent of $N$. Solving this discrete inequality yields
\begin{equation}
\label{est_ek1_2}
	(e_k^1)^2\leq \bar D_1(h+h^{2\alpha}+h^{2\beta}+h^{2\gamma}), \quad k=0,1,\ldots,N,
\end{equation}
with $\bar D_1\geq 0$ independent of $N$. Since
\begin{displaymath}
	|\phi_1(t_k^1)-y_k^1|\leq |\phi_1(t_k^1)-\tilde y_k^1|+|e_k^1|,
\end{displaymath}
by \eqref{est_phi1_tyk1} and \eqref{est_ek1_2}, we arrive at
\begin{equation}
	\label{est_euler_1tcr}
		\max\limits_{0\leq k\leq N}|\phi_1(t_k^1)-y_k^1|\leq C_1 (h^{1/2}+h^{\alpha}+h^{\beta}+h^{\gamma}),
\end{equation}
with $C_1\geq 0$ independent of $N$. 

On the consecutive intervals we proceed by induction. Namely, let us assume that there exist $1\leq j\leq n-1$ and $C_j, \tilde K_j\geq 0$ such that
\begin{equation}
	\label{est_euler_jtcr}
		\max\limits_{0\leq k\leq N}|\phi_j(t_k^j)-y_k^j|\leq C_j (h^{1/2}+h^{\alpha}+h^{\beta}+h^{\gamma}),
\end{equation}
and 
\begin{equation}
	|y_k^j|\leq \tilde K_j, \quad k=0,1,\ldots,N.
\end{equation}
(For $j=1$  the statement has already been proven in \eqref{est_euler_1tcr} and \eqref{est_euler_ykj}.) In the interval $[(j+1)t_{cr}, (j+2)t_{cr}]$ we consider the following ODE
\begin{equation}
\label{dde_12J}
	z'(t)=g_{j+1}(t,z(t)), \quad t\in [(j+1)t_{cr}, (j+2)t_{cr}],
\end{equation}
with the initial value $z((j+1)t_{cr})=\phi_{j+1}((j+1)t_{cr})=\phi_j(t_0^{j+1})$ and $g_{j+1}(t,y)=f(t,y,\phi_j(t-t_{cr}))$. We approximate \eqref{dde_12J} by the following auxiliary Euler scheme 
\begin{eqnarray}
\label{aux_expl_euler_1J}
	\tilde y_0^{j+1}&:=&y^{j+1}_0=y_N^j,\\
	\tilde y_{k+1}^{j+1}&=&\tilde y_k^{j+1}+h\cdot g_{j+1}(t_k^{j+1},\tilde y_k^{j+1}), \quad k=0,1,\ldots, N-1.
\end{eqnarray}
By \eqref{est_euler_jtcr} we have that
\begin{equation}
	|\phi_{j+1}(t_0^{j+1})-\tilde y_0^{j+1}|=|\phi_j(t_N^j)-y_N^j|\leq C_j(h^{1/2}+h^{\alpha}+h^{\beta}+h^{\gamma}).
\end{equation}
Repeating the arguments used from \eqref{dde_12} to \eqref{est_euler_1tcr}, but now for $\eta:=\phi_j(t_0^{j+1})$, $g:=g_{j+1}$, $[a,b]:=[(j+1)t_{cr},(j+2)t_{cr}]$, $\Delta:=C_j (h^{1/2}+h^{\alpha}+h^{\beta}+h^{\gamma})$, we obtain
\begin{equation}
	|\phi_j(t_0^{j+1})|=|\phi_j(t_N^{j})|\leq K_j,
\end{equation}
and
\begin{eqnarray}
	\label{est_euler_j_1tcr}
		\max\limits_{0\leq k\leq N}|\phi_{j+1}(t_k^{j+1})-y_k^{j+1}|&\leq& \max\limits_{0\leq k\leq N}|\phi_{j+1}(t_k^{j+1})-\tilde y_k^{j+1}| + \max\limits_{0\leq k\leq N}|\tilde y_k^{j+1}-y_k^{j+1}| \notag\\
&\leq& C_{j+1} (h^{1/2}+h^{\alpha}+h^{\beta}+h^{\gamma}),
\end{eqnarray}
and
\begin{equation}
	|y_k^{j+1}|\leq \tilde K_{j+1}, \quad k=0,1,\ldots,N,
\end{equation}
with $\tilde K_{j+1},C_{j+1}\geq 0$ independent of $N$,  provided that $N$ is sufficiently large. This ends the proof. 
\end{proof}

\begin{rem}
\label{remark_tilde_rho}
In Lemma \ref{tilde_f} we provided an example of a function $\tilde f$ that satisfies (F1)-(F4). Note that $\tilde f$, with $A(t)=A_1(t)\cdot\dot{\varepsilon}(t), B(t)=A_2(t)\cdot\dot{\varepsilon}(t)^{1-a_9}, C(t)=A_3(t)$, coincides in $[t_{cr},+\infty)\times [0,+\infty)\times [0,+\infty)$ with
\begin{equation}
	f(t,y,z)=A(t)-B(t)\cdot y-C(t)\cdot y^{a_8}\cdot z, \quad t\geq t_{cr}, y\geq 0, z\geq 0,
\end{equation}
which is a right-hand side function of the main equation \eqref{RD_0} for $t\geq t_{cr}$ where $z$ represents the delay term $\mathcal{R}(t-t_{cr})$. Knowing that the solution $\rho$ of \eqref{RD_0} is unique and non-negative (see Theorem \ref{thm:exists}), the solutions of \eqref{RD_0} and 
\begin{equation}
	\tilde \rho'(t)=\tilde f(t,\tilde\rho(t),\tilde\rho(t-t_{cr})),
\end{equation}
coincide for $t\geq t_{cr}$. (We take $\tilde \rho(t)=\rho(t)$ for $t\in [-t_{cr},t_{cr}]$.) This justifies, why we can use the Euler scheme in order to  approximate the nonnegative solution $\rho$ of \eqref{RD_0}. 
\end{rem}
\section{Numerical experiments}\label{numerical_solutions}
In this section we will provide numerical simulations of \eqref{RD_0} with real world parameters. However, before we do so, we will take a closer look
to \eqref{RD_1}, that is we assume that $A_i$ and $\dot \eps$ are time independent. This will give us a preliminary insight into possible evolution of
\eqref{RD_0}. In particular we will see how it changes with the change of parameters $A_i$, in particular influence of the boundary condition $A_1/A_2$.
We will also check how the accuracy of solutions changes with time step, and comment of empirical speed of convergence of numerical solutions.

It is also worth mentioning that \eqref{RD_1}, while much simplified compared to \eqref{RD_0}, has its utility for modeling of our process. Namely, it can be used in inverse analysis,
as laboratory experiments are usually done in controlled environment. In particular $T$ and $\dot \eps$ can be assumed constant in laboratory experiments, which leads to time-independent coefficients $A_i(t)\equiv A_i$.
\subsection{Empirical tests of convergence - selected instances of equation \eqref{RD_1}}
As before, we divide our discussion into two cases when $a_8 \in \{ 0,1 \}$.

In the case when $\mathbf{a_8=0}$, we consider four sets of parameters (see Figure~\ref{fig:a80-a_n})
\begin{enumerate}[(i)]
	\item $\rho_0 = 0, \rho_{cr} = 1, A_1 = 10, A_2 = 1, A_3 = 0.9$,
	\item $\rho_0 = 0, \rho_{cr} = 4, A_1 = 10, A_2 = 2, A_3 =1$,
	\item $\rho_0 = 0, \rho_{cr} = 9, A_1 = 10, A_2 = 1, A_3 =0.9$.
\end{enumerate}
All solutions are considered on the interval $[0, 10 t_{cr}]$. Initial conditions on all particular intervals are given by values of corresponding analytical formula.
\begin{figure}
	\centering
	\begin{subfigure}[t]{0.8\textwidth}
		\centering
		\includegraphics[width=1\linewidth]{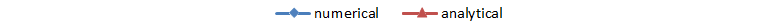}
	\end{subfigure}
	\begin{subfigure}[t]{.95\textwidth}
		\centering
		\includegraphics[width=1\linewidth]{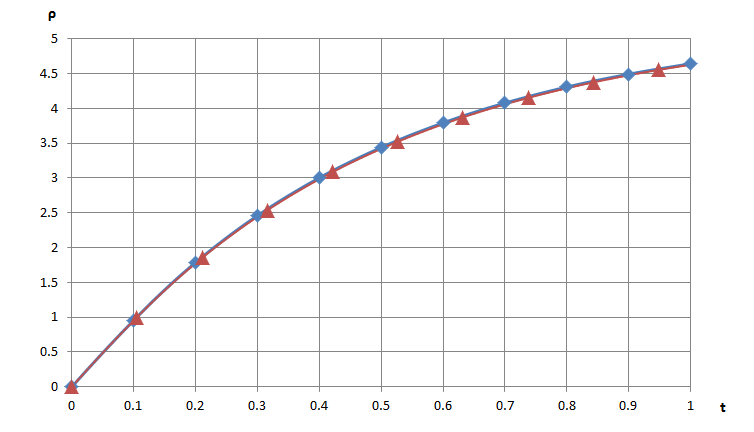}
		\caption{$a_8=0, \rho_0=0, \rho_{cr}=1, A_1=10, A_2=1, A_3=0.9$}\label{pic:a80-1-10-1-09_analytical}		
	\end{subfigure}
	\begin{subfigure}[t]{.95\textwidth}
		\centering
		\includegraphics[width=1\linewidth]{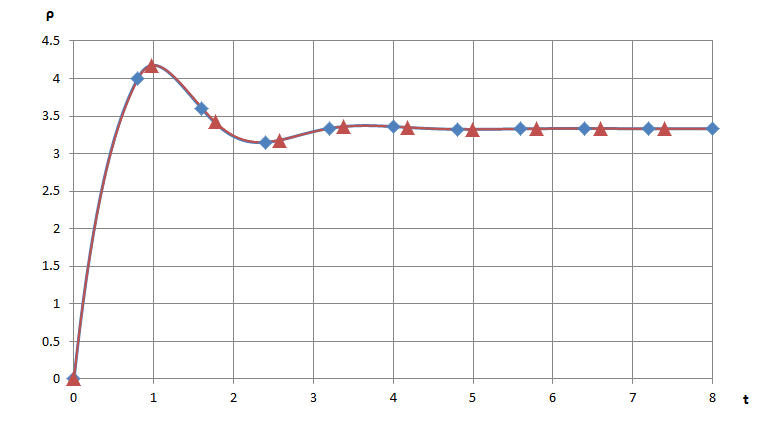}
		\caption{$a_8=0, \rho_0=0, \rho_{cr}=4, A_1=10, A_2=2, A_3=1$}\label{pic:a80-4-10-2-1_analytical}
	\end{subfigure}
	\begin{subfigure}[t]{.95\textwidth}
		\centering
		\includegraphics[width=1\linewidth]{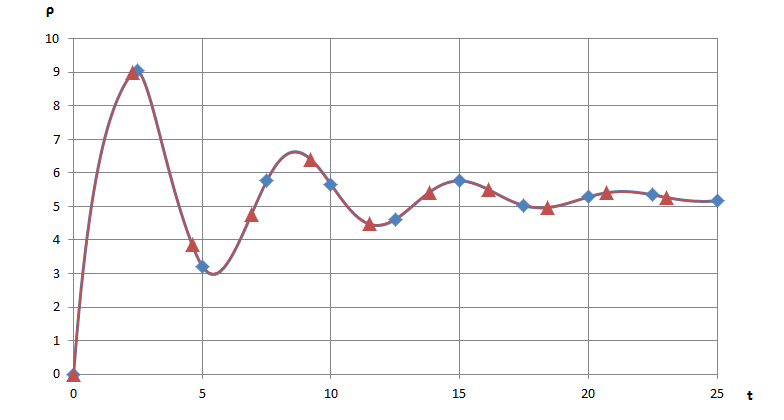}
		\caption{$a_8=0, \rho_0=0, \rho_{cr}=9, A_1=10, A_2=1, A_3=0.9$}\label{pic:a80-9-10-1-09_analytical}
	\end{subfigure}
	\caption{Exemplary analytical and numerical solutions in the case when $a_8=0$.}\label{fig:a80-a_n}
\end{figure}

We also considered case $\mathbf{a_8=0}$  with parameters range showing influence of violated condition $A_2/A_3$ (see Figure~\ref{fig:broken-assumption}):
\begin{enumerate}[(i)]
	\setcounter{enumi}{3}
	\item $\rho_0 = 0, \rho_{cr} = 9, A_1 = 10, A_2 = 1$, $A_3\in [0.9;1.5]$,
	\item $\rho_0=0, \rho_{cr}=4, A_1=10, A_2=1$, $A_3\in [0.5;5]$.
\end{enumerate}

\begin{figure}
	\centering
	\begin{subfigure}[t]{.95\textwidth}
		\setcounter{subfigure}{3}
		\centering
		\includegraphics[width=1\linewidth]{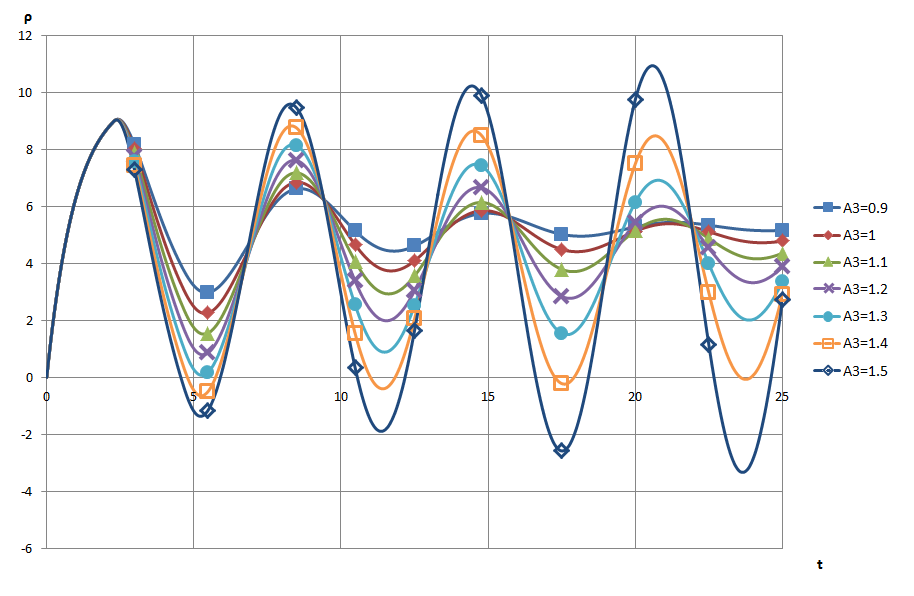}
		\caption{$a_8=0, \rho_0=0, \rho_{cr}=9, A_1=10, A_2=1$}\label{pic:a80-9-10-1-all}		
	\end{subfigure}
	\begin{subfigure}[t]{.95\textwidth}
		\centering
		\includegraphics[width=1\linewidth]{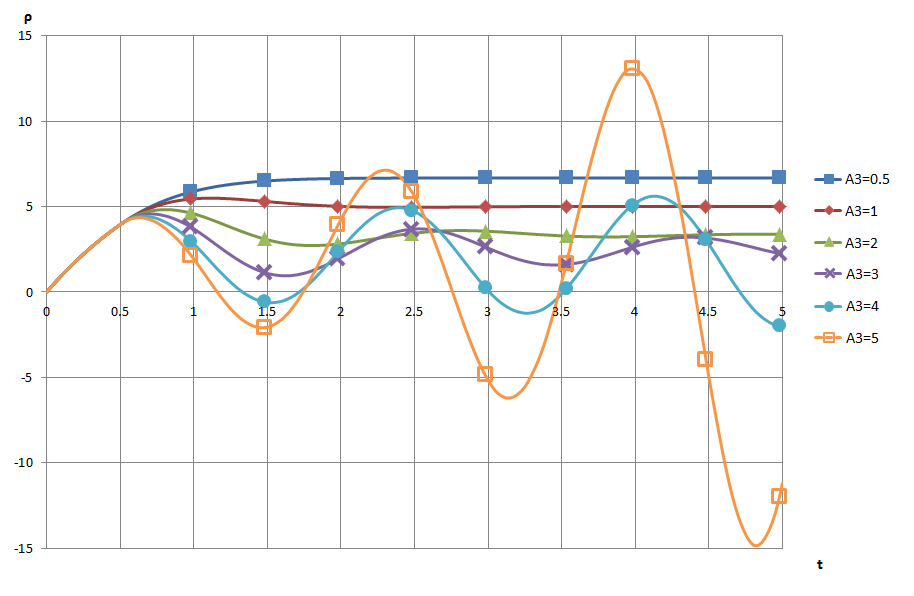}
		\caption{$a_8=0, \rho_0=0, \rho_{cr}=4, A_1=10, A_2=1$}\label{pic:a80-4-10-1-all}
	\end{subfigure}
	\caption{Exemplary numerical solutions in the case when some assumptions are not satisfied.}\label{fig:broken-assumption}
\end{figure}

Note, that in these cases $\frac{A_3}{A_2} = 1$ or we even have $\frac{A_3}{A_2} > 1$. Let us emphasize, that the assumptions of Theorem~\ref{thm:exists:a80} are broken. Nevertheless, the derived methods work properly what suggests that the assumptions might be weakened in further research. Notice however, that while solutions exists (and can be computed), it is hard to find their technological justification (recall that $\rho$ represents dislocation density, so Figure~\ref{fig:broken-assumption}(v) 
definitely cannot represent real technological process). Note that similarly to effect observed in \cite{UF} for equation similar to the case $a_8=0$,
	large $A_3/A_2>1$ (or large $t_{cr}$)  may lead to unbounded oscillations of solutions and technologically unjustified solutions. In fact, as we can see, such solutions may occur even when solution stabilizes (i.e. oscillations are bounded and decreasing in amplitude).

In the case when $\mathbf{a_8=1}$, we consider two exemplary sets of parameters (see Figure~\ref{fig:a81-numerical}):
\begin{enumerate}[(i)]\addtocounter{enumi}{5}
	\item $\rho_0 = 0, \rho_{cr} = 4, A_1 = 10, A_2 = 2, A_3 = 1$,
	\item $\rho_0 = 0, \rho_{cr} = 9, A_1 = 10, A_2 = 1, A_3 = 0.9$.
\end{enumerate}

\begin{figure}
	\setcounter{subfigure}{5}		
	\centering
	\begin{subfigure}[t]{.8\textwidth}
		\setcounter{subfigure}{5}
		\centering
		\includegraphics[width=1\linewidth]{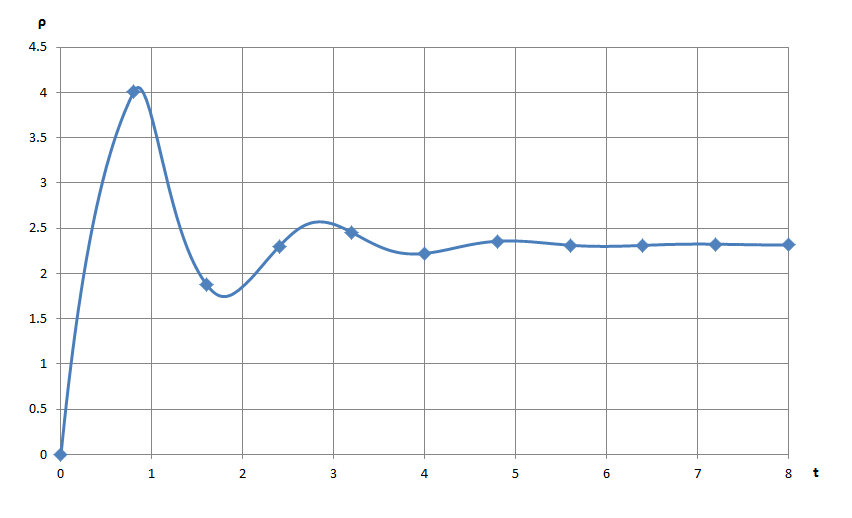}
		\caption{$a_8=1, \rho_0=0, \rho_{cr}=4, A_1=10, A_2=2, A_3=1$}\label{pic:a81-4-10-2-1_analytical}
	\end{subfigure}
	\begin{subfigure}[t]{.8\textwidth}
		\centering
		\includegraphics[width=1\linewidth]{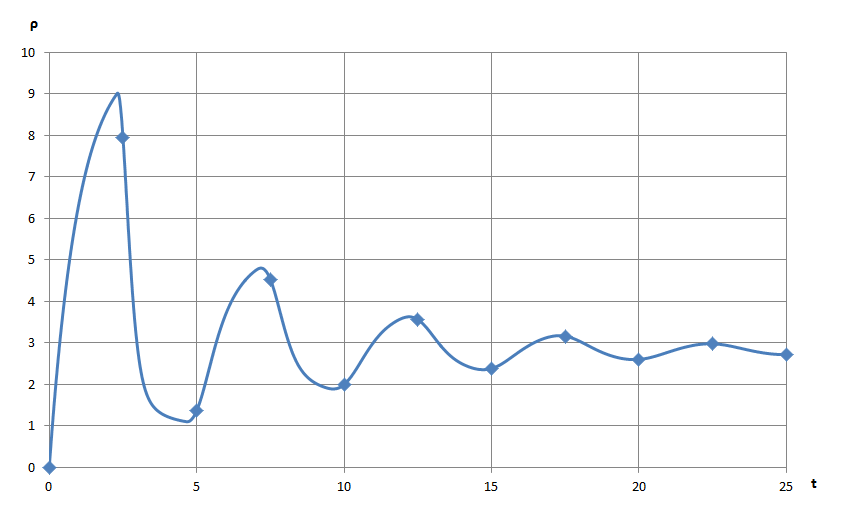}
		\caption{$a_8=1, \rho_0=0, \rho_{cr}=9, A_1=10, A_2=1, A_3=0.9$}\label{pic:a81-9-10-1-09_analytical}
	\end{subfigure}
	\caption{Exemplary numerical solutions in the case when $a_8=1$.}\label{fig:a81-numerical}
\end{figure}

Computing solution in consecutive intervals requires approximating of non elementary integrals. Therefore, the recursive formula for  analytical solution, even for the third interval $[2 t_{cr}, 3 t_{cr}]$, is computationally very demanding (as the integrals needs to be approximated independently in each iteration). 
Because of that, for error comparison using the analytical solution, we restrict our attention only to the interval $[0, 2 t_{cr}]$. Approximations by numerical methods, however, are computed for the whole considered interval $[0,10 t_{cr}]$. Then, the initial conditions for subsequent subintervals are taken from the numerical approximations.

In order to present numerical results of \eqref{RD_1} we have to introduce some additional notations. Let $\textbf{t}_N=(t_0^1, t_1^1, \ldots, t_N^1, \ldots, t_N^m)$ be a vector of points, where $t_k^n=nt_{cr}+kh,\ k=0,1,\ldots,N, \ n=1,2,\ldots,m$,  $h=\frac{t_{cr}}{N}$, and $m \in\mathbb{N}$. For given parameters of \eqref{RD_1}, we denote by $\textbf{z}_N=(z_0^1, z_1^1, \ldots, z_N^1, \ldots, z_N^m)$ a vector of values of exact solution of \eqref{RD_1} computed in $\textbf{t}_N$ points. Let  $\phi$ be explicit Euler, backward Euler or Runge-Kutta scheme, see \cite{Bellen}. For each set of parameters we tracked the behavior of the worst case error, estimated by
\begin{equation}\label{worst-case-error}
	\sup_{0 \leq k \leq N, 1 \leq n \leq m} |  \phi(t_k^n) - z_k^n |, \quad h=\frac{t_{cr}}{N},
\end{equation}
as $N \to \infty$. Results of numerical tests are presented for case (ii) in Figure~\ref{pic:a80_0-4-10-2-1-N} and Table~\ref{tab:a80_0-4-10-2-1}, and for case (vii) in Figure~\ref{pic:a81_0-9-10-1-09-N} and Table~\ref{tab:a81_0-9-10-1-09}, where points of consecutive iterations of the method are depicted by dots. As we can see, the case of $a_8=1$ requires more delicate analysis for choosing $N$, because too small number leads to having points not reflecting properly the dynamics of solutions.
\pic{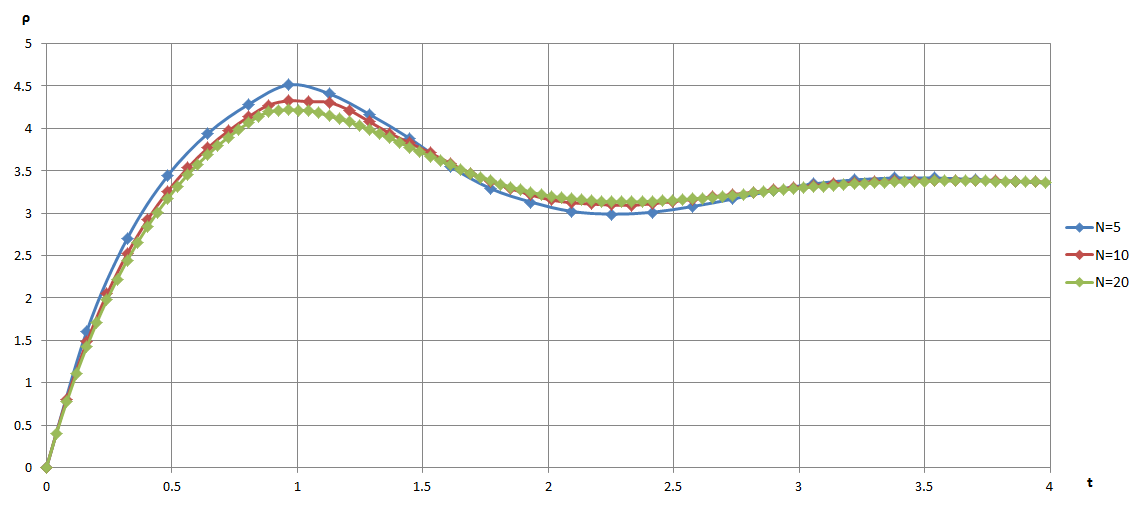}{1}{Numerical test for case (ii) $a_8=0, \\ \rho_0=0, \rho_{cr}=4, A_1=10, A_2=2, A_3=1$ by explicit Euler method. Dots represent consecutive steps of the method.}{pic:a80_0-4-10-2-1-N}
\tab{
explicit Euler &
$1.59 \cdot 10^{-1}$ &
$1.59 \cdot 10^{-1}$ &
$1.03 \cdot 10^{-1}$ &
$7.66 \cdot 10^{-1}$ &
$6.08 \cdot 10^{-2}$ \\ \hline
backward Euler & 
$1.39 \cdot 10^{-1}$ &
$1.39 \cdot 10^{-1}$ &
$9.44 \cdot 10^{-2}$ &
$7.16 \cdot 10^{-2}$ &
$5.77 \cdot 10^{-2}$ \\ \hline
RK4 &
$1.18 \cdot 10^{-5}$ &
$1.18 \cdot 10^{-5}$ &
$2.22 \cdot 10^{-6}$ &
$6.87 \cdot 10^{-7}$ &
$2.78 \cdot 10^{-7}$ \\ \hline
}{
explicit Euler &
$5.05 \cdot 10^{-2}$ &
$4.31 \cdot 10^{-2}$ &
$3.76 \cdot 10^{-2}$ &
$3.34 \cdot 10^{-2}$ &
$3.00 \cdot 10^{-2}$ \\ \hline
backward Euler &
$4.83 \cdot 10^{-2}$ &
$4.15 \cdot 10^{-2}$ &
$3.64 \cdot 10^{-2}$ &
$3.24 \cdot 10^{-2}$ &
$2.92 \cdot 10^{-2}$ \\ \hline
RK4 &
$1.33 \cdot 10^{-7}$ &
$7.12 \cdot 10^{-8}$ &
$4.15 \cdot 10^{-8}$ &
$2.58 \cdot 10^{-8}$ &
$1.69 \cdot 10^{-8}$ \\ \hline
}{
explicit Euler &
$1.49021416 \cdot 10^{-2}$ &
$1.48119132 \cdot 10^{-3}$ &
$1.48029707 \cdot 10^{-4}$ \\ \hline
backward Euler &
$1.47033478 \cdot 10^{-2}$ &
$1.47920577 \cdot 10^{-3}$ &
$1.48009854 \cdot 10^{-4}$ \\ \hline
RK4 &
$1.04235599 \cdot 10^{-9}$ &
$1.03390588 \cdot 10^{-13}$ &
$6.00819550 \cdot 10^{-16}$ \\ \hline
}{Error behavior for case (ii) $a_8=0, \rho_0=0, \rho_{cr}=4, \\ A_1=10, A_2=2, A_3=1$.}{tab:a80_0-4-10-2-1}

\pic{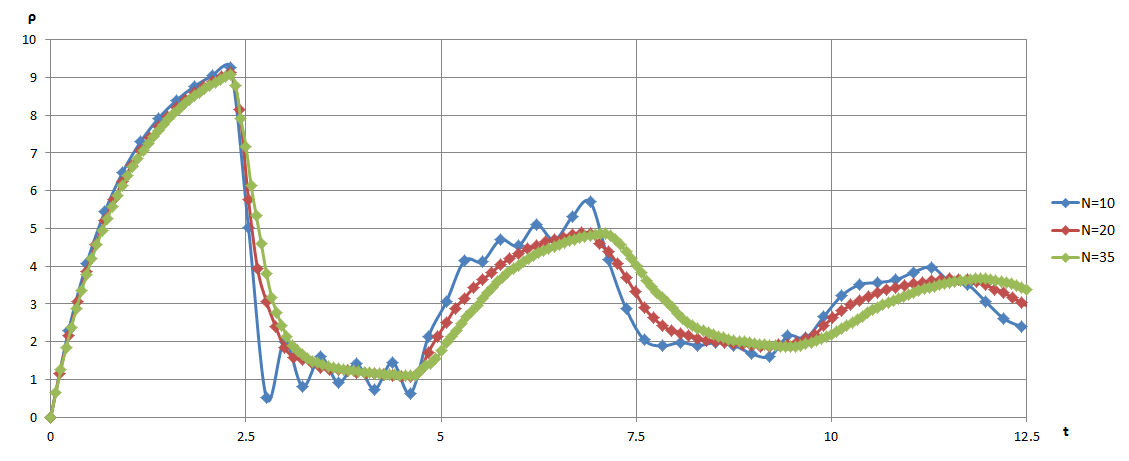}{1}{Numerical test for case (vii) $a_8=1,\\ \rho_0=0, \rho_{cr}=9, A_1=10, A_2=1, A_3=0.9$ by explicit  Euler method. Dots represent consecutive steps of the method.}{pic:a81_0-9-10-1-09-N}

\tab{
explicit Euler & 
$7.21$ &
$5.69$ &
$6.48$ &
$7.40$ &
$7.99$ \\ \hline
backward Euler & 
$6.68$ &
$5.06$ &
$6.48$ &
$7.40$ &
$7.99$ \\ \hline
RK4 &
$3.18 \cdot 10^{-2}$ &
$3.18 \cdot 10^{-2}$ &
$4.70 \cdot 10^{-3}$ &
$1.29 \cdot 10^{-3}$ &
$4.95 \cdot 10^{-4}$ \\ \hline
}{
explicit Euler & 
$7.83$ &
$7.48$ &
$6.22$ &
$3.39$ &
$2.48 \cdot 10^{-1}$ \\ \hline
backward Euler &
$7.82$ &
$7.41$ &
$6.12$ &
$3.54$ &
$2.31 \cdot 10^{-1}$ \\ \hline
RK4 &
$2.26 \cdot 10^{-4}$ &
$1.17 \cdot 10^{-4}$ &
$6.66 \cdot 10^{-5}$ &
$4.08 \cdot 10^{-5}$ &
$2.63 \cdot 10^{-5}$ \\ \hline
}{
explicit Euler &
$1.1936160 \cdot 10^{-1}$ &
$1.1449965 \cdot 10^{-2}$ &
$1.1403143 \cdot 10^{-3}$ \\ \hline
backward Euler &
$1.1448822 \cdot 10^{-1}$ &
$1.1398873 \cdot 10^{-2}$ &
$1.1398000 \cdot 10^{-3}$ \\ \hline
RK4 &
$1.5126559 \cdot 10^{-6}$ &
$1.4050918 \cdot 10^{-10}$ &
$1.5951269 \cdot 10^{-14}$ \\ \hline
}{Error behavior for case (vii) $a_8=1, \rho_0=0, \rho_{cr}=9, \\ A_1=10, A_2=1, A_3=0.9$.}{tab:a81_0-9-10-1-09}

\tab{
explicit Euler & 	
$3.51 \cdot 10^{-1}$ &
$3.51 \cdot 10^{-1}$ &
$2.31 \cdot 10^{-1}$ &
$1.72 \cdot 10^{-1}$ &
$1.37 \cdot 10^{-1}$ \\ \hline
backward Euler & 
$3.26 \cdot 10^{-1}$ &
$3.26 \cdot 10^{-1}$ &
$2.20 \cdot 10^{-1}$ &
$1.66 \cdot 10^{-1}$ &
$1.33 \cdot 10^{-1}$ \\ \hline
RK4 &
$6.86 \cdot 10^{-7}$ &
$6.86 \cdot 10^{-7}$ &
$1.33 \cdot 10^{-7}$ &
$4.15 \cdot 10^{-8}$ &
$1.69 \cdot 10^{-8}$ \\ \hline
}{
explicit Euler &
$1.14 \cdot 10^{-1}$ &
$9.77 \cdot 10^{-2}$ &
$8.54 \cdot 10^{-2}$ &
$7.58 \cdot 10^{-2}$ &
$6.82 \cdot 10^{-2}$ \\ \hline
backward Euler & 
$1.11 \cdot 10^{-1}$ &
$9.57 \cdot 10^{-2}$ &
$8.38 \cdot 10^{-2}$ &
$7.46 \cdot 10^{-2}$ &
$6.72 \cdot 10^{-2}$ \\ \hline
RK4 &
$8.11 \cdot 10^{-9}$ &
$4.36 \cdot 10^{-9}$ &
$2.55 \cdot 10^{-9}$ &
$1.59 \cdot 10^{-9}$ &
$1.04 \cdot 10^{-9}$ \\ \hline
}{
explicit Euler &
$3.39659300 \cdot 10^{-2}$ &
$3.38470334 \cdot 10^{-3}$ &
$3.38417599 \cdot 10^{-4}$ \\ \hline
backward Euler &
$3.37172869 \cdot 10^{-2}$ &
$3.38221709 \cdot 10^{-3}$ &
$3.38392736 \cdot 10^{-4}$ \\ \hline
RK4 &
$6.47237790 \cdot 10^{-11}$ &
$7.25560000 \cdot 10^{-15}$ &
$2.54110000 \cdot 10^{-15}$ \\ \hline
}{Error behavior for case (v) $a_8=0, \rho_0=0, \rho_{cr}=4, \\ A_1=10, A_2=1$ when $A_3=5$.}{tab:a80_0-4-10-1-5}

In all the cases where assumption $\frac{A_3}{A_2} < 1$ is satisfied, viz. (i)-(iii), (vi)-(vii) we observe the theoretical convergence rate. In the cases (iv)-(v) when ratio $\frac{A_3}{A_2}$ is slightly above $1$, some convergence to exact solutions can be observed. However, amplitude of oscillations increases with growing $\frac{A_3}{A_2}$, leading eventually to an unstable solution (see Figure~\ref{fig:broken-assumption}). Nonetheless, for a fixed number of intervals and tending with $N \ra \infty$, we still can observe the theoretical convergence rate of both Euler methods and Runge-Kutta scheme (see Table~\ref{tab:a80_0-4-10-1-5}).
\subsection{Equation \eqref{RD_0} with real world parameters}
In previous section we proved that explicit Euler method (and its modification) gives very satisfactory  results for simplified equation \eqref{RD_1} and proved stability of this method. This entitles us to perform numerical simulations on more complicated equation \eqref{RD_0} with both $\dot{\varepsilon}$ and $T$ time dependent and other parameters (time dependent as well) coming from real world processing of materials. First, let us examine solutions of \eqref{RD_1} with parameters established for DP steel and copper through inverse analysis for the experimental data (uniaxial compression tests performed at constant temperatures and strain rates) using algorithm described in \cite{Add102}. 
When equation \eqref{eq:sigma} is used to calculate the flow stress, the results which are depicted in Figure \ref{pic:real_world1}, are very similar to those in Figure \ref{pic:strain}. 

\begin{figure}[h!]	
	\centering
	\begin{subfigure}[t]{0.8\textwidth}
		\centering
		\includegraphics[width=1\linewidth]{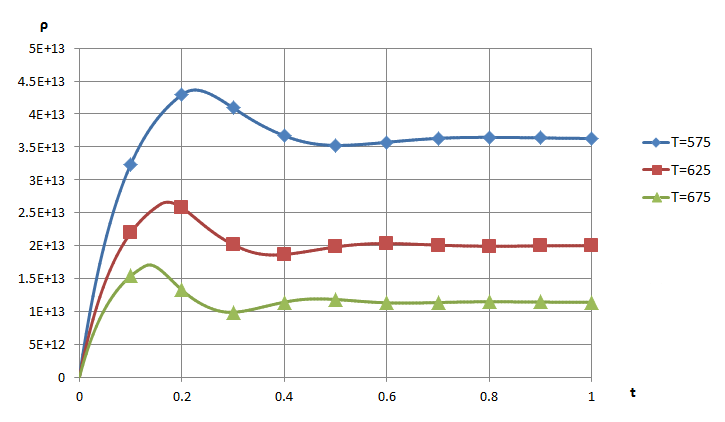}
		\caption{Coefficients for copper in cases: \\ T=575\textdegree C: $A_1=5.35882\cdot 10^{14}, A_2=11.134, A_3=9.9962\cdot 10^{-14}$, \\
T=625\textdegree C: $A_1=3.91516\cdot 10^{14}, A_2=12.9833, A_3=3.30145\cdot 10^{-13}$, \\
T=675\textdegree C: $A_1=2.95672\cdot 10^{14}, A_2=14.8963, A_3=9.61261\cdot 10^{-13}$.}\label{pic:copper_constantT}		
	\end{subfigure}
	\begin{subfigure}[t]{0.8\textwidth}
		\centering
		\includegraphics[width=1\linewidth]{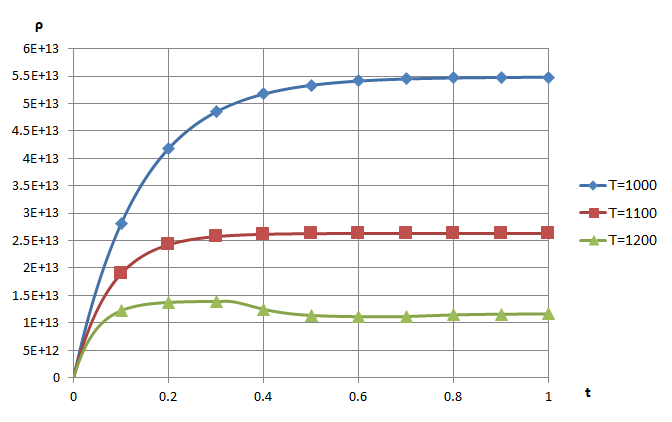}
		\caption{Coefficients for DP steel in cases: \\
T=1000\textdegree C: $A_1=3.93394\cdot 10^{14}, A_2=7.17277, A_3=6.41439\cdot 10^{-7}$, \\
T=1100\textdegree C: $A_1=3.34986\cdot 10^{14}, A_2=12.7284, A_3=1.49657\cdot 10^{-6}$, \\
T=1200\textdegree C: $A_1=2.91544\cdot 10^{14}, A_2=20.895, A_3=3.11231\cdot 10^{-6}$.}\label{pic:steel_constantT}
	\end{subfigure}
	\caption{Calculated solutions for constant temperature $T$ and strain rate $\dot{\varepsilon}=1$.}\label{pic:real_world1}
\end{figure}

This confirms that indeed, we are ready for modeling of real industrial process. Since it is characterized by strong heterogeneity of deformation, we decided to consider industrial process of hot strip rolling for demonstration of capabilities of the developed model. Similarly to previous laboratory case (see Figure~\ref{pic:real_world1}), two materials, DP steel and copper, were considered. Roll pass data, which were the same for both materials, were as follows: initial thickness 20 mm, thickness reduction 50\%, roll radius 400 mm and roll rotational velocity 10 rpm. Thermal-mechanical finite element (FE) model was used in the macro scale to calculate strains, stresses and temperatures. Details of the FE code are given in \cite{rolling,MP}. Briefly, the Levy-Mises flow rule was used as the constitutive law:

\begin{equation}
\sigma=\frac{2}{3} \frac{\sigma_f}{\dot{\varepsilon_i}} \dot{\varepsilon}
\end{equation}
where: $\sigma$, $\dot{\varepsilon}$ - stress and strain rate tensors, respectively, $\dot{\varepsilon_i}$ - effective strain rate, $\sigma_f$ - the flow stress provided by \eqref{eq:sigma}.

\pic{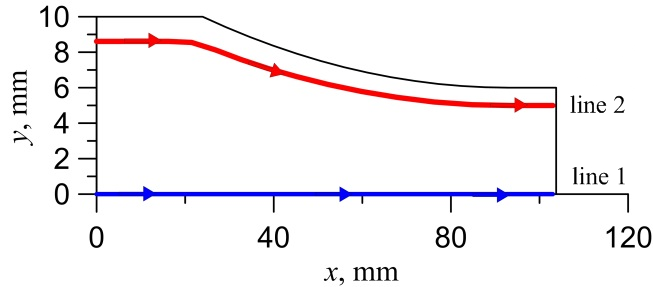}{0.6}{Shape of the deformation zone and flow lines along which the model was solved.}{pic:draw_roll_gap}

Equation \eqref{RD_0} was solved along the flow lines in the deformation zone using current local values of the strain rate and the temperature. The results for two lines, one in the center of the strip and the second one close to the surface (see Figure~\ref{pic:draw_roll_gap}), are presented on Figures~\ref{pic:real_world_copper} and \ref{pic:real_world_steel}. Let us explain the details behind these numerical experiments.
Due to horizontal symmetry only a top part of the roll gap is shown in Figures~\ref{pic:draw_roll_gap},~\ref{pic:maps_steel}, and~\ref{pic:maps_copper}.
The entry temperature was 1060\textdegree C for DP steel and 600\textdegree C for copper. Shear modulus $\mu$ was assumed to be time independent and equal $\mu = 45000$ MPa for copper and $\mu=75000$ MPa for DP steel.
For the assumed parameters the length of the computation domain was 105 mm and the time needed for the material point to flow through this domain was $0.28$ s. Finite element (FE) simulation of the rolling process was performed using FE code described in \cite{rolling,MP} and calculated distributions of the temperature, strain rate and strain are shown in Figure~\ref{pic:maps_steel}  and Figure~\ref{pic:maps_copper}. Results depicted in Figure~\ref{pic:maps_steel} are for the DP steel and in Figure~\ref{pic:maps_copper} for the copper.
On the basis of these results changes of the temperature and the strain rate along the flow lines in Figure~\ref{pic:draw_roll_gap} were determined, leading to time-dependent coefficients $A_1(t), A_2(t), A_3(t)$ as presented on  Figure~\ref{pic:real_world_copper} and Figure~\ref{pic:real_world_steel}. Another important coefficient is $a_8$
	as it is responsible for nonlinearity in \eqref{RD_0}. For copper it was possible to satisfactorily fit the model with $a_8=1$, however for DP steel it had to be fractional because $a_8=1$ was not leading to satisfactory fitting. Fitting was successful with $a_8=0.45239$ and this value was used in our simulations (cf. \cite{rolling}). By the same reason, the coefficient $a_9$ was set to $0$ for copper and to $0.13751$ for DP steel. The coefficient $a_{11}=10^4$ in both cases, which among other things, ensures that $\rho_{cr}$ is never $0$.

\begin{figure}	
	\centering
	\begin{subfigure}[t]{0.8\textwidth}
		\centering
		\includegraphics[width=0.8\linewidth]{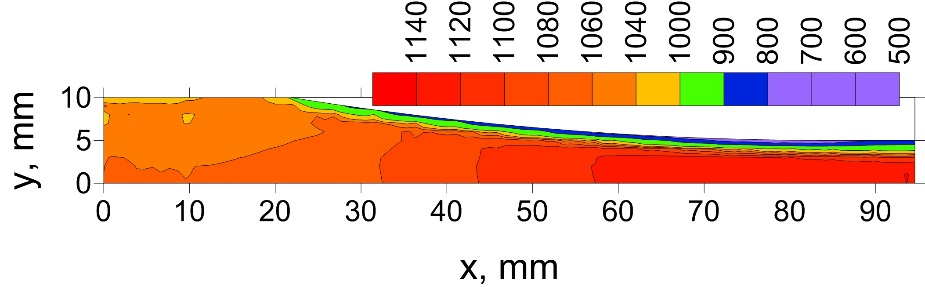}
		\caption{Distribution of the temperature $T$.}\label{pic:map_temp_steel}		
	\end{subfigure}
	\begin{subfigure}[t]{0.8\textwidth}
		\centering
		\includegraphics[width=0.8\linewidth]{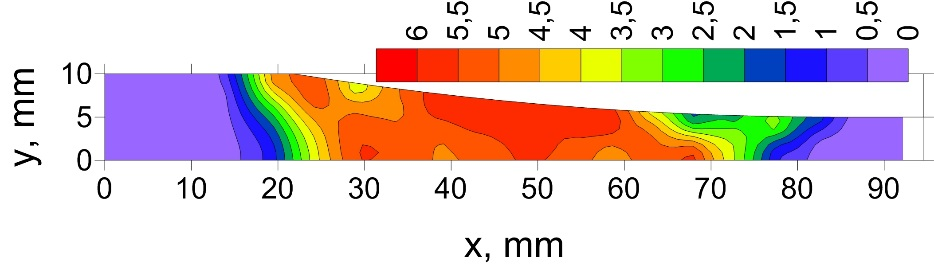}
		\caption{Distribution of the strain rate $\dot \varepsilon$.}\label{pic:map_strainrate_steel}
	\end{subfigure}
	\begin{subfigure}[t]{0.8\textwidth}
		\centering
		\includegraphics[width=0.8\linewidth]{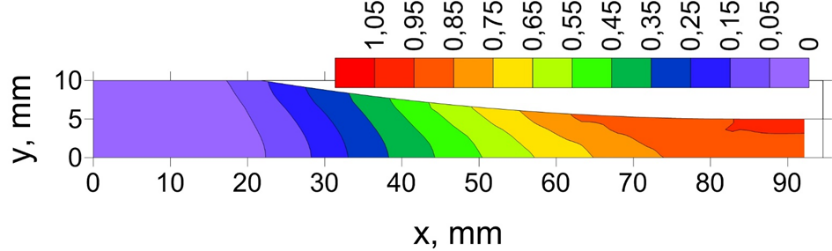}
		\caption{Distribution of the strain $\varepsilon$. }\label{pic:map_strain_steel}
	\end{subfigure}
	\caption{Calculated distribution of the temperature (i), the strain rate (ii) and the strain (iii) in the roll gap  for DP steel.}\label{pic:maps_steel}
\end{figure}

\begin{figure}	
	\centering
	\begin{subfigure}[t]{0.8\textwidth}
		\centering
		\includegraphics[width=0.8\linewidth]{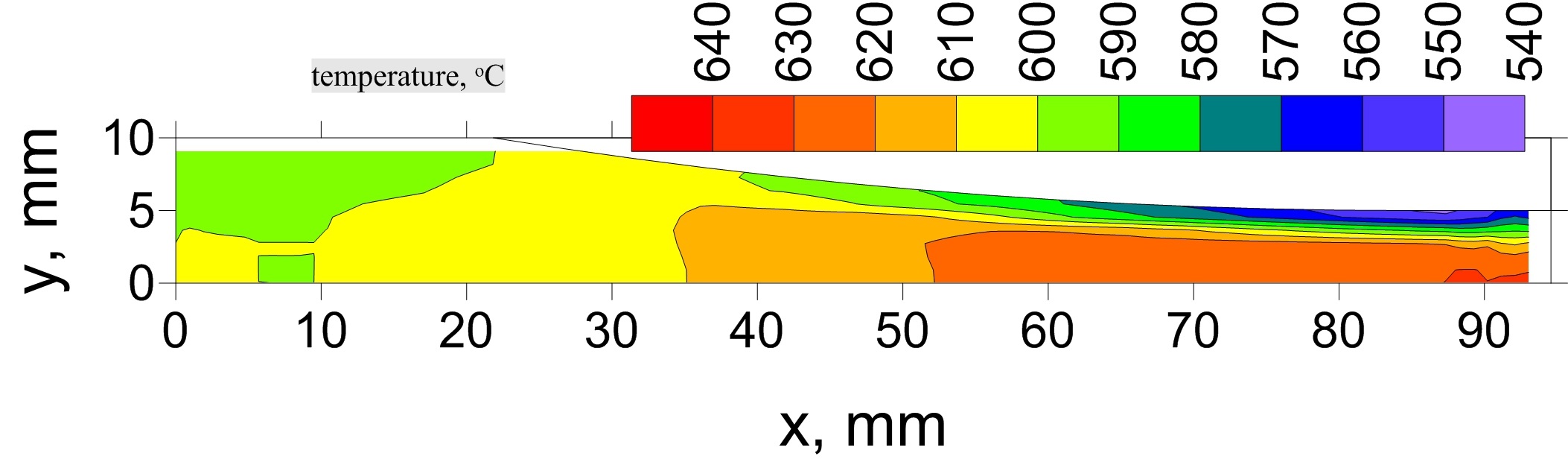}
		\caption{Distribution of the temperature $T$.}\label{pic:map_temp_copper}		
	\end{subfigure}
	\begin{subfigure}[t]{0.8\textwidth}
		\centering
		\includegraphics[width=0.8\linewidth]{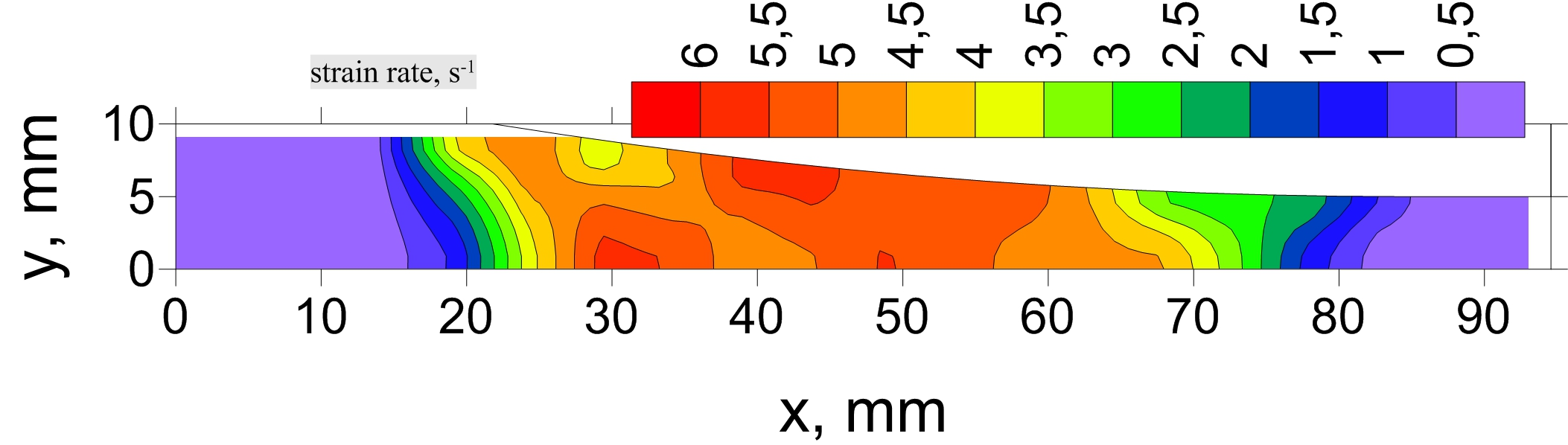}
		\caption{Distribution of the strain rate $\dot \varepsilon$.}\label{pic:map_strainrate_copper}
	\end{subfigure}
	\begin{subfigure}[t]{0.8\textwidth}
		\centering
		\includegraphics[width=0.8\linewidth]{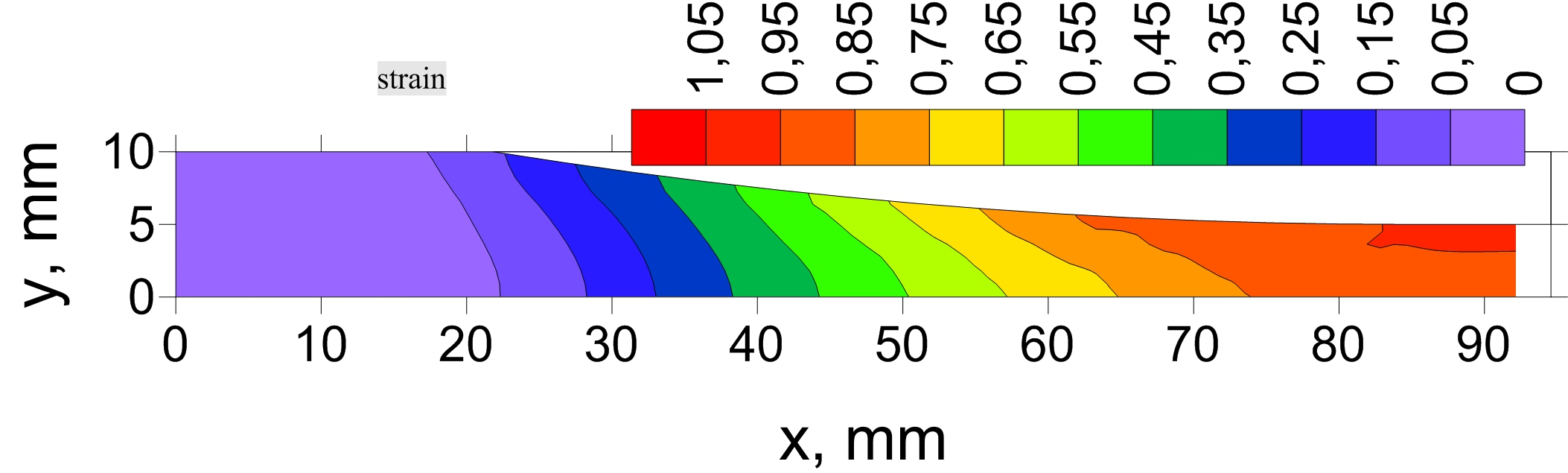}
		\caption{Distribution of the strain $\varepsilon$. }\label{pic:map_strain_copper}
	\end{subfigure}
	\caption{Calculated distribution of the temperature (i), the strain rate (ii) and the strain (iii) in the roll gap for copper.}\label{pic:maps_copper}
\end{figure}

We used this data to deal with \eqref{RD_0}. Calculated evolution solutions $\rho$ with parameters evolving along the lines 1 and 2 in Figure~\ref{pic:draw_roll_gap} are presented in Figure~\ref{pic:real_world_copper} and Figure~\ref{pic:real_world_steel}. Starting density $\rho_0$ was the same for both metals and equal $10^4 \textrm{ m}^{-2}$. Analysis of these results shows that they react properly to distinct temperature and strain rate histories for the center and surface areas. Practical observations show that in the center the temperature increases due to deformation heating. Contrary, drop of the temperature due to heat transfer to the cool roll is observed in the surface area. As far as strain rate is considered, in the central part it decreases monotonically due to monotonic deformation of this part. The results presented in Figure~\ref{pic:real_world_copper} and Figure~\ref{pic:real_world_steel} replicate properly material behavior in these conditions of the deformation. In the surface area, where the temperature is lower and the strain rate is higher, critical dislocation density is higher  and so $\rho_{cr}$ given by equation \eqref{eq:rhocr} is reached later. In the center of the strip higher temperature leads to more dynamic recrystallization and a decrease of the dislocation density.

\begin{figure}[h!]
	\centering
	\begin{subfigure}[t]{0.8\textwidth}
		\centering
		\includegraphics[width=1\linewidth]{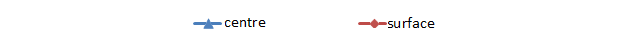}
	\end{subfigure}
	\centering
	\begin{subfigure}[t]{0.4\textwidth}
		\centering
		\includegraphics[height=4cm]{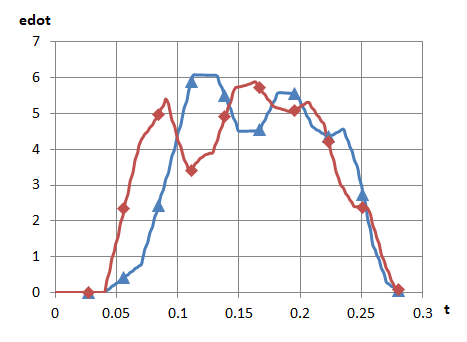}
		\caption{Graph of $\dot{\varepsilon}$}\label{pic:copper_edot}		
	\end{subfigure}
	\quad \quad
	\begin{subfigure}[t]{0.5\textwidth}
		\centering
		\includegraphics[height=4cm]{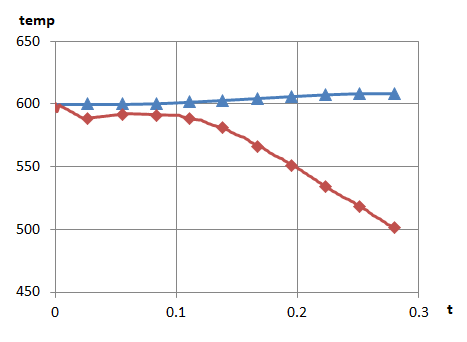}
		\caption{Graph of $T$}\label{pic:copper_temp}
	\end{subfigure}	
	\begin{subfigure}[t]{0.4\textwidth}
		\centering
		\includegraphics[height=4cm]{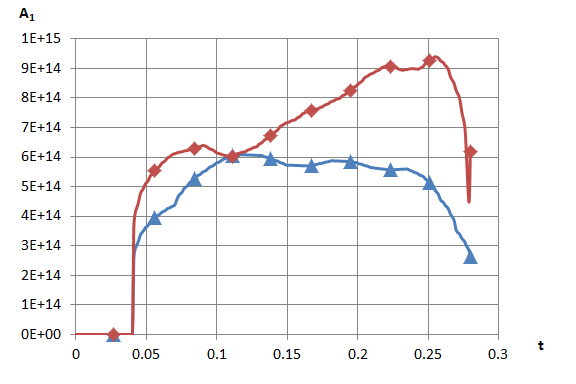}
		\caption{Graph of $A_1(t)$}\label{pic:copper_A1}		
	\end{subfigure}
	\quad
	\begin{subfigure}[t]{0.5\textwidth}
		\centering
		\includegraphics[height=4cm]{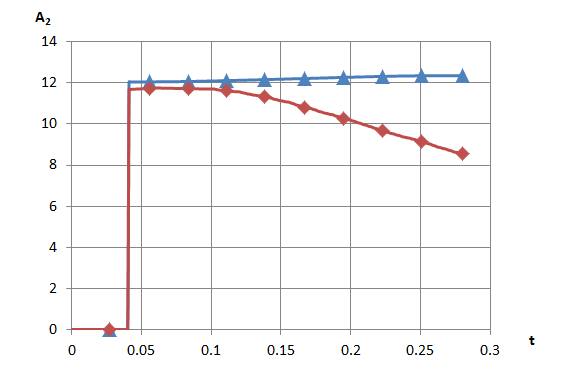}
		\caption{Graph of $A_2(t)$}\label{pic:copper_A2}
	\end{subfigure}
	\centering
	\begin{subfigure}[t]{0.4\textwidth}
		\centering
		\includegraphics[height=4cm]{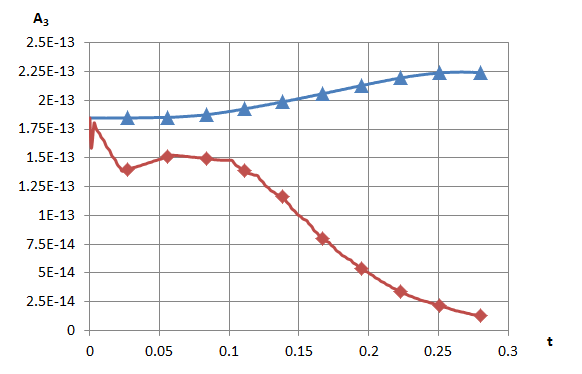}
		\caption{Graph of $A_3(t)$}\label{pic:copper_A3}		
	\end{subfigure}
	\quad \quad
	\begin{subfigure}[t]{0.5\textwidth}
		\centering
		\includegraphics[height=4cm]{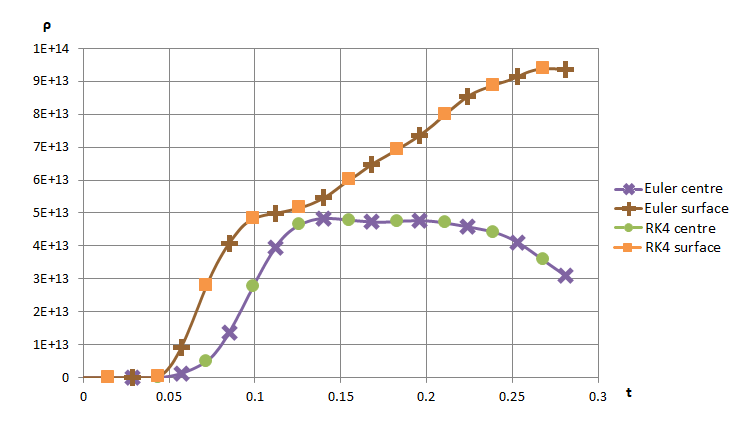}
		\caption{Solution $\rho(t)$ of \eqref{RD_0}}\label{pic:copper_rho}
	\end{subfigure}
	\caption{Calculated coefficients and solutions for copper. }\label{pic:real_world_copper}
\end{figure}

\begin{figure}[h!]
	\centering
	\begin{subfigure}[t]{0.8\textwidth}
		\centering
		\includegraphics[width=1\linewidth]{key1.png}
	\end{subfigure}
	\centering
	\begin{subfigure}[t]{0.4\textwidth}
		\centering
		\includegraphics[height=4cm]{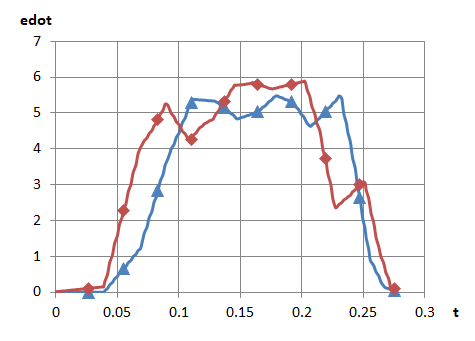}
		\caption{Graph of $\dot{\varepsilon}$}\label{pic:steel_edot}		
	\end{subfigure}
	\quad \quad
	\begin{subfigure}[t]{0.5\textwidth}
		\centering
		\includegraphics[height=4cm]{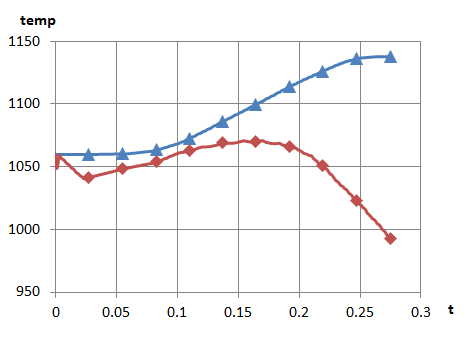}
		\caption{Graph of $T$}\label{pic:steel_temp}
	\end{subfigure}		
	\begin{subfigure}[t]{0.4\textwidth}
		\centering
		\includegraphics[height=4cm]{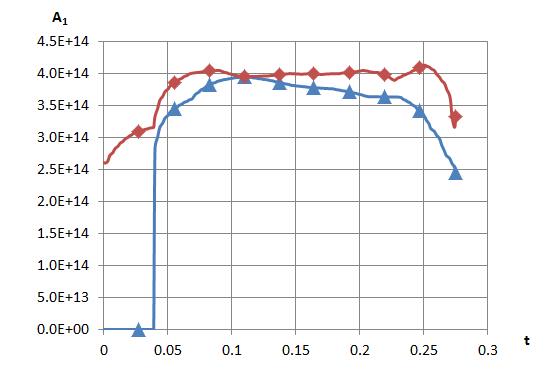}
		\caption{Graph of $A_1(t)$}\label{pic:steel_A1}		
	\end{subfigure}
	\quad
	\begin{subfigure}[t]{0.5\textwidth}
		\centering
		\includegraphics[height=4cm]{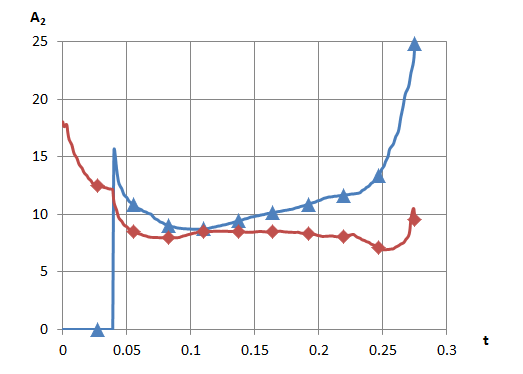}
		\caption{Graph of $A_2(t)$}\label{pic:steel_A2}
	\end{subfigure}
	\centering
	\begin{subfigure}[t]{0.4\textwidth}
		\centering
		\includegraphics[height=4cm]{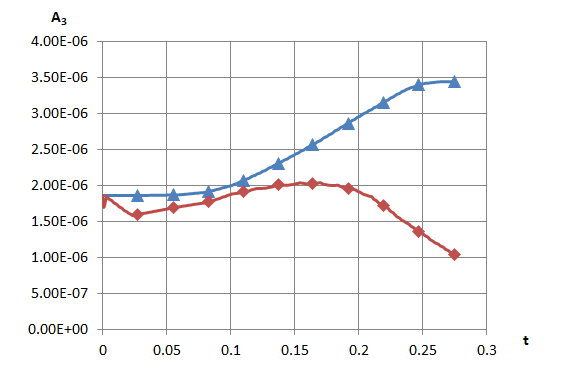}
		\caption{Graph of $A_3(t)$}\label{pic:steel_A3}		
	\end{subfigure}
	\quad \quad
	\begin{subfigure}[t]{0.5\textwidth}
		\centering
		\includegraphics[height=4cm]{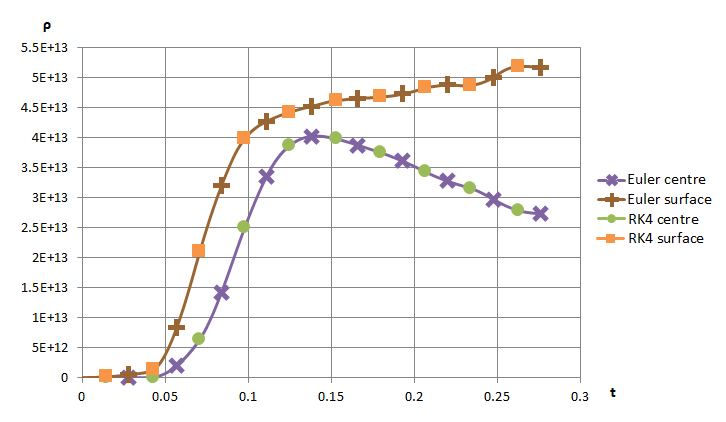}
		\caption{Solution $\rho(t)$ of \eqref{RD_0}}\label{pic:steel_rho}
	\end{subfigure}
	\caption{Calculated coefficients and solutions for DP steel.}\label{pic:real_world_steel}
\end{figure}
\section{Conclusions}
In the paper we have investigated mathematical aspects of  evolution of dislocation density in metallic materials, modeled by delay differential equations \eqref{RD_0}. For typical range of real world parameters we have shown that the unique solution always exists and it is bounded. For approximation of the solution we have used the explicit Euler method. We have shown the rate of convergence of the Euler method in the case when the right-hand side function is only locally H\"older continuous. We have confirmed our theoretical findings in numerical experiments performed in special cases, when explicit solutions were known. Moreover, we have applied the algorithm to examples with real-world parameters. Despite the fact that for a Runge-Kutta method we have not been able to investigate its error under conditions $(F1)-(F4)$, required by the equation, we tested its numerical behavior taking the Euler scheme as a benchmark. Numerical experiments showed advantage of the Runge-Kutta method over the Euler schemes. This encouraged us to use Runge-Kutta methods  in real world applications. However, investigation of its theoretical properties under the assumptions $(F1)-(F4)$ are forwarded to a future work. 
\section*{Acknowledgments}
This research was supported by National Science Centre (Narodowe Centrum Nauki - NCN) in Poland, grant no. 2017/25/B/ST8/01823.

\end{document}